\documentclass[11pt,reqno,a4paper]{amsart}
 \usepackage{amsgen, amstext,amsbsy,amsopn, amsthm, amsfonts,amssymb,amscd,amsmath,euscript,enumerate,url,verbatim,calc,tikz}
\usepackage{hyperref}
\usepackage{MnSymbol}
\usepackage{pgfkeys}
\usepackage{mathtools}
\usetikzlibrary{calc}
\usepackage{etex}
 \RequirePackage{etex}
\usepackage[left=1.2in,right=1.2in,bottom=1.5in]{geometry}

\usepackage{thmtools, thm-restate}
\usepackage{pst-node}
\usepackage{array}
\usepackage{mathrsfs}
\usepackage{tikz-cd}

\usepackage{tikz}
\usepackage{tikz-network}
\usepackage{eqnarray,amsmath}

\let\emptyset\varnothing

\def\multiset#1#2{\ensuremath{\left(\kern-.3em\left(\genfrac{}{}{0pt}{}{#1}{#2}\right)\kern-.3em\right)}}

\usetikzlibrary{arrows}

 \usepackage{latexsym}
 \usepackage{graphics}
 \usepackage{color}
\usepackage{lastpage}
\usepackage{fancyhdr}
\usepackage{multirow}
\usepackage[capitalise]{cleveref}
\allowdisplaybreaks
\usepackage{graphicx}
\graphicspath{ {F:/IMAGES/} }

\makeatletter
\def\oversortoftilde#1{\mathop{\vbox{\m@th\ialign{##\crcr\noalign{\kern3\p@}%
      \sortoftildefill\crcr\noalign{\kern3\p@\nointerlineskip}%
      $\hfil\displaystyle{#1}\hfil$\crcr}}}\limits}

\def\sortoftildefill{$\m@th \setbox\z@\hbox{$\braceld$}%
  \braceld\leaders\vrule \@height\ht\z@ \@depth\z@\hfill\braceru$}

\makeatother

 \newcommand{\p}{\mathfrak{p}}
 \newcommand{\la}{\langle}
 \newcommand{\ra}{\rangle}

 \newcommand{\RV}{\mathcal{RV}}

  \newcommand{\N}{\mathbb{N}}

\def\v{\mathrm{v}}

  \newcommand{\pd}{\operatorname{pd}}

  \newcommand{\Ass}{\operatorname{Ass}}

  \newcommand{\reg}{\operatorname{reg}}
  \newcommand{\im}{\operatorname{im}}
  \newcommand{\mat}{\operatorname{m}}

 \newcommand{\depth}{\operatorname{depth}}

\newcommand{\proset}{\,\mathrel{\lower 4pt\hbox{$\scriptscriptstyle/$}
\mkern -14mu\subseteq }\,} 

 \newtheorem{theorem}{Theorem}[section]
 \newtheorem{lemma}[theorem]{Lemma}
 \newtheorem{proposition}[theorem]{Proposition}

 \newtheorem{conjecture}[theorem]{Conjecture}

\usepackage{amsmath}

 \theoremstyle{definition}
 
 \newtheorem{remark}[theorem]{Remark}
 \newtheorem{definition}[theorem]{Definition}

\newtheorem{innercustomthm}{Theorem}
\newenvironment{customthm}[1]
  {\renewcommand\theinnercustomthm{#1}\innercustomthm\itshape}
  {\endinnercustomthm}

\title[Lattice points arising from regularity and $\mathrm{v}$-number]{Lattice points arising from regularity and $\mathrm{v}$-number of Graphs: Whisker and Cameron-Walker}
\author{Prativa Biswas, Mousumi Mandal, and Kamalesh Saha}

\thanks{AMS Classification 2020: 05C69, 05C25, 05E40, 13D02, 13F55}
\thanks{Key words and phrases: $\mathrm{v}$-number, Castelnuovo-Mumford regularity, edge ideals, lattice points, whisker graphs, Cameron-Walker graphs}
\address{Department of Mathematics, Indian Institute of Technology Kharagpur, 721302, India}\email{prativabiswassnts@kgpian.iitkgp.ac.in}
\address{Department of Mathematics, Indian Institute of Technology Kharagpur, 721302, India}\email{mousumi@maths.iitkgp.ac.in}
\address{Department of Mathematics, SRM University-AP, Amaravati 522240, Andhra Pradesh, India}\email{kamalesh.saha44@gmail.com; kamalesh.s@srmap.edu.in}
\begin{document}

\begin{abstract}
Let $G$ be a simple graph on $n$ vertices and $I(G)\subseteq R$ be its edge ideal. In this paper, we initiate the study of determining lattice points in $\mathbb{N}^2$ that appear as a pair $(\mathrm{reg}(R/I(G)), \mathrm{v}(I(G)))$, where $G$ ranges over all connected graphs on $n$ vertices, and we denote this set by $\mathcal{RV}(n)$. Here `$\mathrm{reg}$' denotes the (Castelnuovo-Mumford) regularity and `$\mathrm{v}$' denotes the $\mathrm{v}$-number. We establish general bounds for $\RV(n)$ by identifying two sets $A(n)$ and $B(n)$ satisfying $A(n)\subseteq \mathcal{RV}(n)\subseteq B(n)$. Furthermore, we explicitly determine the subsets of $\mathcal{RV}(n)$ consisting of all possible pairs $(\mathrm{reg}(R/I(G)), \mathrm{v}(I(G)))$ arising from whisker graphs and Cameron-Walker graphs on $n$ vertices. Finally, we propose a conjecture on the subset of $\mathcal{RV}(n)$ arising from connected chordal graphs.
\end{abstract}

\maketitle

\section{Introduction}

In graph theory, a classical problem is to determine the set of lattice points arising as tuples of graph invariants for connected graphs on a fixed number of vertices. More concretely, let $a_1(G), \ldots, a_k(G)$ be $k$ invariants associated with a graph $G$. One then investigates the collection of all possible tuples 
\[
(a_1(G), \ldots, a_k(G)),
\]
where $G$ ranges over all connected graphs on a fixed number of vertices $n$. Essentially, understanding these boundaries helps determine which combinations of graph invariants are possible and prevents us from searching for graphs that cannot exist.


Since the introduction of edge ideals by Villarreal \cite{vil90} in 1990, this perspective has attracted significant attention in commutative algebra as well. Researchers study algebraic invariants of edge ideals in a similar spirit, and this has now become a classical problem in commutative algebra. For instance, considering the set of all possible pairs $(\reg(R/I(G)), \pd(R/I(G)))$ for connected graphs $G$ with $|V(G)| = n$ provides insight into the possible shapes and sizes of the Betti tables of the modules $R/I(G)$. Here $I(G)$ denotes the edge ideal of $G$, $R=K[V(G)]$ with $K$ a field, $\reg$ denotes the (Castelnuovo-Mumford) regularity, and $\pd$ stands for projective dimension.\par 

In \cite{hh21}, Ha-Hibi initiated the investigation of the pair $(\pd(R/I(G)),\reg(R/I(G)))$ and later Erey-Hibi in \cite{eh22} determine complete list of these pairs for bipartite graphs. In \cite{hkkmv21}, Hibi-Kanno-Kimura-Matsuda-Van Tuyl completely determine the lattice points arising as the tuples $(\depth(R/I(G)),\reg(R/I(G)),\dim(R/I(G)),\deg (h(R/I(G))))$ for Cameron-Walker graphs on $n$ vertices. In \cite{hkmt21}, Hibi-Kimura-Matsuda-Tsuchiya studied the pair $(\reg(R/I(G)),a(R/I(G)))$ for Cameron-Walker graphs. In \cite{hkmv22}, Hibi-Kimura-Matsuda-Van Tuyl studied the pair $(\reg(R/I(G)),\deg(h(R/I(G))))$ and determine these pairs completely for Cameron-Walker graphs. In \cite{hmadani23}, Hibi-Madani investigated the triplet $(\kappa(G), f(G), diam(G))$ for connected graphs on $n$ vertices. In \cite{hku23}, Higashitani-Kanno-Ueji investigated the possible pairs $(\dim(I(G)),\depth(I(G)))$ for connected graphs on $n$ vertices.\par 

In 2020, motivated by the study of Reed–Muller type codes, Cooper et al. introduced a new invariant of ideals, called the $\v$-number, named after Wolmer Vasconcelos (also referred to as the Vasconcelos number). Since its introduction, the $\v$-number has attracted considerable attention, particularly in the context of edge ideals (see \cite{bm25, bms24, civan23, ficsimon25, fm25, grv21, jv21, kmt25, kns25, sahacover23, saha-binexpv, ksvgor23, ss22, sahavan25}). Notably, Saha–Sengupta \cite{ss22} and Civan \cite{civan23} established that no general relationship exists between the $\v$-number and the Castelnuovo–Mumford regularity of edge ideals. In particular, it is possible for $\reg(R/I(G))$ to be arbitrarily larger than $\v(I(G))$, and conversely, for $\v(I(G))$ to significantly exceed $\reg(R/I(G))$ even for connected graphs. For simplicity of notation, we write $\reg(G)$ to mean $\reg(R/I(G))$, and $\v(G)$ instead of $\v(I(G))$. We denote the set of all possible pairs $(\reg(G),\v(G))$, where $G$ ranges over all connected graphs on $n$ vertices, by $\RV(n)$, i.e.,
$$\RV(n)=\left\{ (r,v)\in \mathbb{N}^2 \;\middle|\; 
\begin{array}{l}
      \text{there exists a connected graph $G$ on $n$ vertices}\\
      \hspace{1cm}\text{ with } \reg(G)=r \text{ and } \v(G)=v
\end{array}
     \right\}.$$

The observations in the previous paragraph naturally lead to the problem of determining the set of all possible pairs $(\reg(G), \v(G))$ arising from connected graphs on $n$ vertices, that is, determining the set $\RV(n)$. Moreover, even after knowing $\RV(n)$, it is important and challenging to find subsets of $\RV(n)$ when we restrict ourselves to some important classes of graphs in the context of edge ideals, such as chordal, bipartite, whisker, Cameron-Walker, etc. To date, no systematic study has been carried out in this direction, and no general method is available for handling the $\v$-number compared to other invariants for graphs with a fixed number of vertices. In this paper, we investigate this problem in detail and propose approaches that may pave the way for future developments in the study of the $\v$-number. The details of the work done in this paper are described below section-wise.

In \Cref{preli}, we recall the preliminaries required for this paper. \Cref{sec:3} is devoted to study the lattice points in $\N^2$ appearing as pairs of (regularity,$\v$-number) of connected graphs on a fixed number of vertices. First, in \Cref{prop:v<=mat}, we establish an upper bound for $\v(G)$ using the edge domination number of $G$. Then, for a given positive integer $n\geq 3$, we find two sets $A(n)$ and $B(n)$ in $\N^2$ which gives a prediction about our desired lattice points. The main result of this section is the following.
\begin{customthm}{\ref{thm:RV(n)}}
    Let $A(n)=\{(r,v)\in \mathbb{N}^{2} \mid 1\leq r<\frac{n}{2}, 1\leq v\leq r-\lceil\frac{r}{n-2r}\rceil+1\}$ and $B(n)=\{(r,v)\in\mathbb{N}^2 \mid 1\leq r<\frac{n}{2}, 1\leq v<\frac{n}{2}\}$. Then for $n\geq 3$, we have
    $$A(n)\subseteq \mathcal{RV}(n)\subseteq B(n).$$
\end{customthm}
\noindent Next, in \Cref{sec:whisker graph}, we completely determine all possible pairs (regularity,$\v$-number) arising from connected whisker graphs on a fixed number of vertices. Specifically, we find the following subset of $\RV(n)$, which is a restriction of $\RV(n)$ to whisker graphs, denoted by $\RV_{W}(n)$.
\begin{customthm}{\ref{thm:wrv}}
    For the class of connected whisker graphs $W_G$ on $n=2m$ vertices, the lattice points of $(\reg(W_G),\v(W_G))$ are given by
$$\mathcal{RV}_{W}(n)=\{(r,v)\mid 1\leq r\leq m-1 \text{ and } 1\leq v\leq r-\left\lceil\frac{r}{m-r}\right\rceil+1\}.$$
Moreover, since a whisker graph always has even number of vertices, we have $\RV_{W}(n)=\emptyset$ if $n$ is odd.
\end{customthm}
\noindent In the final section (\Cref{sec:CW}), we carry out a study similar to that for whisker graphs, focusing on Cameron–Walker graphs. Precisely, we determine the set $\RV_{CW}(n)$, which is the restriction of the set $\RV(n)$ to Cameron-Walker graphs. A classification of the lattice points in $\mathbb{N}^2$ arising as pairs (regularity, $\v$-number) of Cameron–Walker graphs on a fixed number of vertices is presented below.
\begin{customthm}{\ref{thm:cwrv}}
     For the class of Cameron-Walker graphs, we have $\mathcal{RV}_{CW}(n)=\emptyset$ for $n<5$, and for $n\geq 5$, we have
     $$\mathcal{RV}_{CW}(n)=\left\{(r,v)\mid 2\leq r\leq \left\lceil\frac{n-1}{2}\right\rceil \text{ and } 1\leq v\leq \min\{r-1,n-2r\}\right\}.$$
\end{customthm}
\noindent We conclude this article with a conjecture on the subset of $\RV(n)$ that arises from connected chordal graphs, along with some possible directions for future research.

\section{Preliminaries}\label{preli}
In this section, we recall some of the relevant prerequisites on graph theory and commutative algebra.

\begin{definition}
     A (simple) graph $G$ is a pair $(V(G),E(G))$, where $V(G)$ is a finite set, called the vertex set of $G$, and $E(G)$ is a collection of cardinality two subsets of $G$, known as the edge set of $G$. The edge ideal of $G$, denoted by $I(G)$, is a quadratic square-free monomial ideal of $R=K[V(G)]$ defined as 
     $$I(G):=\la x_{i}x_{j}\mid \{x_i,x_j\}\in E(G)\ra.$$
\end{definition}

A graph $H$ is called a \textit{subgraph} of a graph $G$ if $V(H)\subseteq V(G) $ and $E(H)\subseteq E(G)$. A subgraph $H$ of $G$ is said to be an \textit{induced subgraph} of $G$ if for any two vertices $u,v\in V(H)$, $\{u,v\}\in E(H)$ whenever $\{u,v\}\in E(G)$. For any $A\subseteq V(G)$, we denote by $G[A]$ the induced subgraph of $G$ on the vertex set $A$. Also, we write $G-A$ to denote the induced subgraph $G[V(G)\setminus A]$. If $A=\{x\}$ consists of a single vertex, then we simply write $G-x$ instead of $G-\{x\}$. The \textit{neighbor} set of $A$ in $G$, denoted by $N_{G}(A)$, is defined by
$$N_{G}(A)=\{x_i\in V(G)\mid \{x_i,x_j\}\in E(G) \text{ for some }x_j\in A\}.$$
We write $N_{G}[A]:=N_{G}(A)\bigcup A$. Again, for any two subsets $A,B$ of $V(G)$, we define $N_{B}(A)$ as the neighbors of $A$ in $B$, i.e., $N_{B}(A)=N_{G}(A)\bigcap B$. The \textit{degree} of a vertex $x$ in $G$, denoted by $\deg_{G}(x)$, is the cardinality of the set $N_{G}(\{x\})$.

A set $A\subseteq V(G)$ is said to be an independent set of $G$ if the graph $G[A]$ contains no edge, in other words, there is no edge between any two vertices in $A$. The maximum size among all independent sets in $G$ is known as \textit{independence} number of $G$, denoted by $\alpha(G)$. A set $C\subseteq V(G)$ is said to be a vertex cover of $G$ if $C\bigcap e\neq \emptyset$ for every $e\in E(G)$. By these two definitions, one can verify that $A$ is a maximal independent set of $G$ if and only if $N_{G}(A)=V(G)\setminus A$ is a minimal vertex cover of $G$. Note that if $A$ is an independent set, then $N_{G}(A)\bigcap A=\emptyset$.

\begin{definition}
    Let $G$ be a graph and $M$ be a set of pairwise disjoint edges of $G$. Then $M$ is called a \textit{matching} of $G$. The matching number of $G$ is the number of edges in a maximum matching of $G$ and is denoted by $\mat(G)$. An \textit{induced matching} of the graph $G$ is a matching $M=\{e_1,\ldots,e_m\}$ of $G$ such that the only edges of $G$ contained in $\bigcup_{i=1}^{m}{e_i}$ are $e_1,\ldots,e_m$. The \textit{induced matching number} of $G$ is the number of edges in a maximum induced matching of $G$ and is denoted by $\mbox{im}({G})$.
\end{definition}

\begin{definition}
    Let $R$ be a polynomial ring with standard grading and $I\subseteq R$ be a graded ideal. Then for any $\p\in \Ass(I)$ there exists a homogeneous polynomial $f$ such that $(I:f)=\p$, where $\Ass(I)$ denote the set of associated primes of $I$. The $\v$-number of $I$ is defined as follows:
    $$\v(I):=\min\{\deg(f)\mid \text{ $f$ is homogeneous and $(I:f)$ is a prime}\}.$$
\end{definition}

Generally, the invariant $\v$-number depends on the characteristic of the base field $K$. However, Jaramillo and Villarreal showed in \cite{jv21} that the $\v$-number is independent of the choice of the base field for square-free monomial ideals by providing a combinatorial description of the $\v$-number. In particular, thanks to \cite[Lemma 3.4]{jv21} and \cite[Theorem 3.5]{jv21}, we get the following combinatorial definition of the $\v$-number in case of the edge ideal of a graph or simply, the $\v$-number of a graph.

\begin{definition}\label{def:v}
Let $G$ be a graph and $A$ be an independent set of $G$ such that $N_{G}(A)$ is a vertex cover of $G$. Then we have $(I(G):X_{A})=\la N_{G}(A)\ra$, where $X_A=\prod_{x_i\in A}x_i$. Moreover, we have
    $$\v(G):=\min\{|A| \,\mid\, \text{$A$ is independent and $N_{G}(A)$ is a vertex cover of $G$}\}.$$
    If $A$ is an independent set such that $N_{G}(A)$ is a vertex cover of $G$ and $|A|=\v(G)$, then we say $A$ is an independent set of $G$ corresponding to $\v(G)$.
\end{definition}

Now, let us recall some well-known classes of graphs.
\begin{definition}
    A \textit{path} graph on $n$ vertices, denoted by $P_n$, is a graph such that after relabeling the vertices we have $V(P_n)=\{x_1,\ldots,x_n\}$ and $E(P_n)=\{\{x_{i},x_{i+1}\}\mid 1\leq i\leq n-1\}$. A graph on $n$ vertices is called a \textit{cycle} of length $n$, denoted by $C_n$, if it is connected and every vertex has exactly two neighbors. If a connected graph contains no induced cycle, then we call it a \textit{tree}, and a disjoint union of trees is called a \textit{forest}. A graph $G$ is called \textit{chordal} if $G$ has no induced cycle of length more than three. A \textit{bipartite} graph is a graph without any induced cycle of odd length, in other words, a graph is said to be bipartite if the vertex set can be partitioned into two independent sets. A \textit{complete} graph on $n$ vertices, denoted by $K_n$, is a graph such that there is an edge between every pair of vertices. 
\end{definition}

 Let $M$ be a finitely generated graded $R$-module. Then we can write the graded minimal free resolution of $M$ in the following form:
    $$0\rightarrow\bigoplus_{j\in\mathbb{Z}} R(-j)^{\beta_{g,j}(M)}\rightarrow\cdot\cdot\cdot\rightarrow\bigoplus_{j\in\mathbb{Z}} R(-j)^{\beta_{1,j}(M)}\rightarrow \bigoplus_{j\in\mathbb{Z}} R(-j)^{\beta_{0,j}(M)}\rightarrow M\rightarrow 0,$$
    where $\beta_{i,j}(M)\in \mathbb{Z}_{\geq 0}$ is called the $(i,j)$-th graded \textit{Betti} number of $M$ and $R(-j)$ denotes the polynomial ring shifted in degree $j$.
    
\begin{definition}
    The \textit{Castlenuovo-Mumford regularity} (or \textit{regularity} in short) of $M$, denoted by $\reg(M)$, is defined as follows 
    $$\reg(M):=\max\{j-i\mid \beta_{i,j}(M)\neq 0\}.$$
\end{definition}

\begin{lemma}\cite[Lemma 2.10]{dhs13}\label{Dao}
Let $G$ be a simple graph. Then for any vertex $x$ of $G$,
$$\reg(G)\leq \max\{\reg(G-x),~\reg(G-N_G[x])+1\}.$$   
\end{lemma}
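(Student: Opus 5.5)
The plan is to derive the inequality from the standard short exact sequence obtained by multiplying by the variable $x$, and then to identify the two outer terms with the edge ideals of $G-x$ and $G-N_G[x]$. Write $I=I(G)$ and $R=K[V(G)]$. Multiplication by $x$ gives the graded short exact sequence
$$0\longrightarrow R/(I:x)(-1)\xrightarrow{\ \cdot x\ } R/I\longrightarrow R/(I,x)\longrightarrow 0 .$$
The usual regularity estimate for short exact sequences, read off from the long exact sequence of $\operatorname{Tor}(-,K)$ or of local cohomology, bounds the middle term:
$$\reg(R/I)\leq \max\{\reg(R/(I:x)(-1)),\ \reg(R/(I,x))\}=\max\{\reg(R/(I:x))+1,\ \reg(R/(I,x))\}.$$

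The next step is to compute the two colon and sum ideals combinatorially.
\begin{itemize}
\item For the quotient by $(I,x)$: every generator $xy$ of $I$ with $y\in N_G(x)$ becomes redundant once $x$ is added. Hence $(I,x)=I(G-x)+(x)$.
\item For the colon ideal: $(I:x)$ is generated by the generators of $I$ with $x$ removed from each. So $(I:x)=(N_G(x))+I(G-x)$. Every edge of $G-x$ that meets $N_G(x)$ is already absorbed by these variables. Hence $(I:x)=I(G-N_G[x])+(N_G(x))$.
\end{itemize}

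In both cases the ideal is the edge ideal of an induced subgraph $H$ plus a set of variables not occurring in $I(H)$. Going modulo those variables gives $R/(I(H)+(\text{vars}))\cong R'/I(H)\otimes$ (nothing extra), where $R'$ is the polynomial ring in the remaining variables. Regularity is unchanged when we pass between $R'$ and $R'$ adjoined with the free variables not in $H$. This is because adjoining free variables is a flat extension that preserves the graded Betti numbers, and quotienting by variables not involved in $I(H)$ is a Koszul tensor factor that does not change $\reg$. Thus
$$\reg(R/(I,x))=\reg(G-x),\qquad \reg(R/(I:x))=\reg(G-N_G[x]).$$
Substituting these into the displayed inequality gives the lemma.

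The only point requiring a little care is this last identification. It amounts to the fact that if $J\subseteq K[y_1,\dots,y_s]$ and $z_1,\dots,z_t$ are new variables, then $K[y,z]/(J,z)\cong K[y]/J$ as graded modules over $K[y]$. Moreover, the Betti numbers over $K[y,z]$ are obtained by tensoring with a Koszul complex on the $z$'s. That tensoring raises the homological degree and the internal degree equally, so $j-i$ is preserved and the regularity is unchanged. This step is routine, so I expect no genuine obstacle. The substantive content of the argument is the short exact sequence.
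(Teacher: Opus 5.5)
Your argument is correct. The paper gives no proof of this lemma and only cites \cite[Lemma 2.10]{dhs13}; the standard proof of that result is exactly your route. It uses the sequence $0\to R/(I:x)(-1)\to R/I\to R/(I,x)\to 0$, the identifications $(I,x)=I(G-x)+(x)$ and $(I:x)=I(G-N_G[x])+(N_G(x))$, and the fact that regularity is unchanged by adjoining or killing variables that do not appear in the edge ideal.
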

Next we have available bounds of regularity and $\mathrm{v}$-number for graphs from the literature:
\begin{theorem}\cite[Lemma 2.2]{katzman06}\label{thm:reg lower}
 For any graph $G$, $\reg(G)\geq \im(G)$.
\end{theorem}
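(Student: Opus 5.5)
The plan is to reduce the claim to a Betti-number nonvanishing statement for an induced subgraph, and then compute the regularity of a disjoint union of edges directly. Let $M=\{e_1,\ldots,e_m\}$ be an induced matching of $G$ with $m=\im(G)$, and let $H=G[\bigcup_{i=1}^m e_i]$. Since $M$ is induced, $H$ is exactly the disjoint union of the $m$ edges $e_i=\{a_i,b_i\}$. Hence $I(H)=\la a_1b_1,\ldots,a_mb_m\ra$, and this ideal is generated by a regular sequence of quadrics.

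The first step is to show $\reg(H)=m$. The Koszul complex on $a_1b_1,\ldots,a_mb_m$ is a minimal free resolution of $R_H/I(H)$, where $R_H=K[V(H)]$. Its last module is $R_H(-2m)$ in homological degree $m$. Therefore $\beta_{m,2m}(R_H/I(H))\neq 0$, which gives $\reg(H)\geq 2m-m=m$. The other Koszul modules give $j-i=2i-i=i\leq m$, so in fact $\reg(H)=m$. Alternatively, regularity is additive over tensor products of resolutions of variable-disjoint ideals, and $\reg(K[a,b]/(ab))=1$.

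The second step is the key one: show that an induced subgraph cannot have larger regularity, i.e. $\beta_{i,j}(R_H/I(H))\leq\beta_{i,j}(R/I(G))$. I would use Hochster's formula,
$$\beta_{i,j}(R/I(G))=\sum_{W\subseteq V(G),\,|W|=j}\dim_K \tilde H_{j-i-1}(\Delta_W;K),$$
where $\Delta$ is the independence complex of $G$. The independence complex of an induced subgraph $G[U]$ is exactly the restriction $\Delta_U$. So every summand appearing for $H$ also appears for $G$, and all summands are nonnegative. Hence $\beta_{m,2m}(R/I(G))\geq\beta_{m,2m}(R_H/I(H))\neq 0$, and so $\reg(G)\geq m=\im(G)$.

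For this particular Betti number one can also see it concretely. Take $W=V(H)$. Then $\Delta_W$ is the join of $m$ copies of the $0$-sphere $\{a_i\}\sqcup\{b_i\}$, which is a sphere $S^{m-1}$. This has $\tilde H_{m-1}\neq 0$, matching $j-i-1=2m-m-1$.

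The main obstacle is justifying the restriction inequality, which depends on knowing that independence complexes restrict correctly to induced subgraphs. This is routine once Hochster's formula is invoked. If I wanted to avoid Hochster, I could instead induct using \Cref{Dao}'s companion inequality $\reg(G-x)\leq\reg(G)$, deleting the vertices outside $V(H)$ one at a time. However, that inequality is not stated earlier in the paper, so the Hochster argument is the cleaner route.
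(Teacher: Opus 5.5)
Your proof is correct. The paper gives no proof of its own and only cites \cite[Lemma 2.2]{katzman06}; your argument is essentially the standard one from that source: Hochster's formula gives $\beta_{i,j}$ of an induced subgraph at most $\beta_{i,j}$ of $G$, and the Koszul resolution of the complete intersection on the $\im(G)$ disjoint edges gives $\beta_{m,2m}\neq 0$.
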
       

\begin{proposition}\label{prop:reg<n/2}
    Let $G$ be a connected graph on $n\geq 3$ vertices. Then $\reg(G)<\frac{n}{2}$.
\end{proposition}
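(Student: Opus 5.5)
The plan is to prove the stronger inequality $\reg(G)\leq \mat(G)$ and then show that $\mat(G)<\frac{n}{2}$ unless $G$ has a perfect matching, handling the perfect matching case separately. The bound $\reg(G)\le \mat(G)$ is classical (Hà–Van Tuyl), but since only \Cref{Dao} is available in the paper, I would reprove it by induction on the number of vertices. Pick a vertex $x$ that is not isolated. By \Cref{Dao}, $\reg(G)\le\max\{\reg(G-x),\reg(G-N_G[x])+1\}$. Clearly $\mat(G-x)\le\mat(G)$. For the second term, take any edge $\{x,y\}$ with $y\in N_G(x)$. A maximum matching of $G-N_G[x]$ avoids both $x$ and $y$, so adding $\{x,y\}$ to it gives a matching of $G$. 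Hence $\mat(G-N_G[x])+1\le \mat(G)$. Isolated vertices do not affect regularity, and $\reg$ of the empty edge ideal is $0$, so the induction closes.

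Now let $G$ be connected with $n\ge 3$ vertices. If $n$ is odd, then $\mat(G)\le \frac{n-1}{2}<\frac n2$ and we are done. If $n$ is even, $\mat(G)\le \frac n2$, and we need to rule out $\reg(G)=\frac n2$. I would strengthen the induction statement as follows: if $\reg(G)=\mat(G)=\frac{n}{2}$ for a graph without isolated vertices, then $G$ is a disjoint union of $\frac n2$ edges, i.e.\ $G$ is itself an induced perfect matching.

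To prove this, suppose $\reg(G)=\frac n2$ and pick a vertex $x$ of degree at least $2$; such a vertex exists because $G$ is connected with $n\ge 3$. Then $G-N_G[x]$ has at most $n-3$ vertices, so $\reg(G-N_G[x])+1\le \lfloor\frac{n-3}{2}\rfloor+1<\frac n2$. On the other side, $G-x$ has $n-1$ vertices, which is odd, so $\reg(G-x)\le\mat(G-x)\le \frac{n-2}{2}<\frac n2$. Thus \Cref{Dao} gives $\reg(G)<\frac n2$, a contradiction.

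This argument in fact avoids the stronger induction statement entirely: for even $n$, a single application of \Cref{Dao} at a vertex of degree at least $2$ suffices. So the only real work is the lemma $\reg\le\mat$. Its main subtlety is bookkeeping in the induction: the deleted subgraphs may have isolated vertices, and the base case of edgeless graphs must give regularity $0$.
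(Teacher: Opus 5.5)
Your proof is correct. It follows the same outline as the paper: bound $\reg$ by the matching number, note that odd $n$ is then immediate, and treat even $n$ separately. The difference is that you make it self-contained.

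The paper simply cites two external results. It uses the H\`a--Van Tuyl bound $\reg(G)\le\mat(G)$ for the odd case, and a corollary of Erey--Hibi to exclude $\reg(G)=\frac{n}{2}$ when $n$ is even. You replace both citations with arguments built only on \Cref{Dao}, which the paper already states:
\begin{itemize}
\item You derive $\reg\le\mat$ by induction. The check $\mat(G-N_G[x])+1\le\mat(G)$, the edgeless base case with regularity $0$, and the remark that isolated vertices do not change regularity are exactly what that induction needs.
\item For even $n$, you apply \Cref{Dao} once at a vertex $x$ of degree at least $2$. Then $G-x$ has $n-1$ vertices and $G-N_G[x]$ has at most $n-3$, so both terms are at most $\lfloor\frac{n-1}{2}\rfloor$.
\end{itemize}
Your route also shows that connectivity is used only to guarantee a vertex of degree at least $2$. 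In fact it gives $\reg(G)\le\lfloor\frac{n-1}{2}\rfloor$ for every graph with such a vertex, whatever the parity of $n$. So the odd/even split, and your initial idea of strengthening the induction hypothesis, are not needed.

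The paper's version is shorter but relies on the heavier extremal result of Erey--Hibi. Yours uses only a lemma already stated in the paper and makes transparent why the bound is strict.
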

\begin{proof}
    By \cite[Theorem 6.7]{hv08}, we have $\reg(G)\leq m(G)$. From the definition of matching number, we see that $m(G)\leq \frac{n}{2}$. If $n$ is odd, then we obviously have $m(G)<\frac{n}{2}$, and consequently, $\reg(G)<\frac{n}{2}$. If $n$ is even, then $n\geq 4$, and thus, we have $\reg(G)<\frac{n}{2}$ by \cite[Corollary 3.3]{eh22}.
\end{proof}

\begin{theorem}\cite[Theorem 2.18]{zheng04}\label{thm:reg forest}
  If $G$ is a forest, then $\reg(G)= \im(G)$.  
\end{theorem}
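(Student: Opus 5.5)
The plan is to prove the two inequalities $\reg(G)\geq \im(G)$ and $\reg(G)\leq \im(G)$ separately. The first holds for every graph and is exactly \Cref{thm:reg lower}, so only the upper bound $\reg(G)\leq\im(G)$ for forests needs work. I would prove it by induction on $|V(G)|$, using \Cref{Dao} as the inductive tool. The class of forests is closed under taking induced subgraphs, so the induction hypothesis applies to every graph obtained by deleting vertices.

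For the base case, suppose $G$ has no edges. Then $I(G)=0$, so $\reg(G)=0=\im(G)$.

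For the inductive step, suppose $G$ is a forest with at least one edge. Some tree component of $G$ has an edge, and such a tree has a leaf $y$; let $x$ be its unique neighbour. Applying \Cref{Dao} at the vertex $x$ gives
$$\reg(G)\leq \max\{\reg(G-x),\ \reg(G-N_G[x])+1\}.$$
Both $G-x$ and $G-N_G[x]$ are forests on fewer vertices. By induction, $\reg(G-x)=\im(G-x)$ and $\reg(G-N_G[x])=\im(G-N_G[x])$. It therefore suffices to prove two combinatorial inequalities:
\begin{itemize}
\item[(i)] $\im(G-x)\leq \im(G)$;
\item[(ii)] $\im(G-N_G[x])+1\leq \im(G)$.
\end{itemize}

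Inequality (i) is immediate, because an induced matching of an induced subgraph of $G$ is an induced matching of $G$.

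Inequality (ii) is the key step, and it is where the choice of $x$ as the neighbour of a leaf matters. Let $M$ be a maximum induced matching of $G-N_G[x]$. I claim that $M\cup\{\{x,y\}\}$ is an induced matching of $G$.
\begin{itemize}
\item No vertex of an edge in $M$ lies in $N_G[x]$, so no edge of $G$ joins $x$ to a vertex covered by $M$.
\item The only neighbour of $y$ is $x$, so no edge of $G$ joins $y$ to a vertex covered by $M$.
\item Edges among the vertices covered by $M$ are the same in $G$ as in $G-N_G[x]$, because the latter is an induced subgraph.
\end{itemize}
Hence $\im(G)\geq |M|+1=\im(G-N_G[x])+1$.

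Combining (i) and (ii) with the displayed inequality gives $\reg(G)\leq \im(G)$, which completes the induction. I expect no serious obstacle. The only point needing care is choosing to delete the neighbour $x$ of a leaf rather than an arbitrary vertex. For an arbitrary vertex $x$, the extra edge added to $M$ might have an endpoint adjacent to vertices covered by $M$, and inequality (ii) could then fail.
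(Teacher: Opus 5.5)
Your proof is correct. It takes a genuinely different route from the paper, which gives no argument for this statement and simply quotes Zheng; Zheng's proof works directly with the resolution of the (facet) ideal of a tree.

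You get $\reg(G)\ge\im(G)$ from \Cref{thm:reg lower}. You get the reverse inequality by induction on $|V(G)|$, applying \Cref{Dao} at the neighbour $x$ of a leaf $y$. Both combinatorial inequalities $\im(G-x)\le\im(G)$ and $\im(G-N_G[x])+1\le\im(G)$ are verified correctly. The degenerate cases, such as $G-N_G[x]$ being edgeless or empty, are covered by your base case with $\reg=\im=0$. This matches the form $\reg(R/I(G))$ in which \Cref{Dao} is stated.

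Your route has two advantages. First, it is self-contained, using only results already recorded in the preliminaries. Second, it is portable. The only feature of the leaf you use is $N_G[y]\subseteq N_G[x]$ for some $y\neq x$. So the same induction works for any class closed under induced subgraphs in which every graph with an edge has such a pair. Taking $y$ to be a non-isolated simplicial vertex and $x$ any neighbour of $y$, it reproves \Cref{thm:reg chordal} for chordal graphs as well.

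Zheng's resolution-level analysis is heavier. In exchange, it yields structural information about the resolution beyond the value of the regularity, which an inequality like \Cref{Dao} cannot provide.
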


\begin{theorem}\cite[Corollary 6.9]{hv08}\label{thm:reg chordal}
    For a chordal graph $G$, we have $\reg(G)=\im(G)$.
\end{theorem}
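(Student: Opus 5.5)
The equality $\reg(G)=\im(G)$ is obtained from two inequalities. The bound $\reg(G)\geq \im(G)$ is exactly \Cref{thm:reg lower}, so all the work is in showing $\reg(G)\leq \im(G)$ for chordal $G$. I would prove this by induction on $|V(G)|$, using \Cref{Dao} together with a simplicial vertex. Recall that a vertex $x$ is \emph{simplicial} if $N_G(x)$ is a clique. I will use the classical fact (Dirac) that every chordal graph has a simplicial vertex, and more strongly that every connected chordal graph on at least two vertices has one.

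\textbf{Base case and choice of vertices.} If $G$ has no edges, then $I(G)=0$ and $\reg(G)=0=\im(G)$. Otherwise, pick a connected component of $G$ containing an edge. It is a chordal graph on at least two vertices, so it has a simplicial vertex $x$, and $\deg_G(x)\geq 1$ because the component is connected. Choose a neighbor $y$ of $x$. Applying \Cref{Dao} to the vertex $y$ gives
$$\reg(G)\leq \max\{\reg(G-y),\ \reg(G-N_G[y])+1\}.$$
Both $G-y$ and $G-N_G[y]$ are induced subgraphs of $G$, hence chordal, and they have fewer vertices than $G$. By induction, $\reg(G-y)=\im(G-y)$ and $\reg(G-N_G[y])=\im(G-N_G[y])$.

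\textbf{The first term.} An induced matching of an induced subgraph is an induced matching of $G$, so $\im(G-y)\leq \im(G)$.

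\textbf{The second term.} It remains to show $\im(G-N_G[y])+1\leq \im(G)$. Let $M$ be a maximum induced matching of $H=G-N_G[y]$; I claim $M\cup\{\{x,y\}\}$ is an induced matching of $G$.
\begin{itemize}
\item No vertex of $H$ is adjacent to $y$, by construction of $H$.
\item No vertex $z$ of $H$ is adjacent to $x$. Indeed, since $x$ is simplicial and $y\in N_G(x)$, every vertex of $N_G(x)$ other than $y$ is adjacent to $y$. Hence $N_G(x)\subseteq N_G[y]$, and $z\notin N_G[y]$ forces $z\notin N_G(x)$.
\item The only edges of $G$ among $V(M)$ are the edges of $M$, because $H$ is an induced subgraph of $G$ and $M$ is induced in $H$.
\end{itemize}
So $M\cup\{\{x,y\}\}$ is induced in $G$, which gives $\im(G)\geq \im(H)+1$. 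Combining the two terms yields $\reg(G)\leq \im(G)$, completing the induction.

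The step requiring care is this second term. Its key point is the choice of $y$ as a neighbor of a \emph{simplicial} vertex, which guarantees $N_G(x)\subseteq N_G[y]$. The rest is routine.
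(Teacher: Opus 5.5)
Your proof is correct, but it does not follow the paper, because the paper gives no argument of its own. The statement is quoted from H\`a--Van Tuyl \cite{hv08}. There it is a special case of their theorem on triangulated hypergraphs (a hypergraph analogue of chordality), proved with the splitting techniques for edge ideals developed in that paper. The same work also shows that the graded Betti numbers are independent of the base field.

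You instead give a short, self-contained induction. It uses only facts the paper already records, namely \Cref{Dao} and \Cref{thm:reg lower}, together with Dirac's theorem on simplicial vertices. The decisive point is the one you single out. Applying \Cref{Dao} at a neighbor $y$ of a simplicial vertex $x$ gives $N_G(x)\subseteq N_G[y]$, so $\{x,y\}$ extends every induced matching of $G-N_G[y]$. Applying it at $x$ itself would fail: for $G=P_4$ with $x$ a leaf, the resulting bound is $2$, while $\reg(P_4)=1$.

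What your approach buys is brevity. It also shows the paper could have derived this fact from its own preliminaries rather than importing it. What the cited approach buys is considerably more: information about the graded Betti numbers, including their field-independence, in the generality of hypergraphs. By contrast, the short-exact-sequence inequality in \Cref{Dao} only bounds the regularity from above, which is why you need the separate lower bound $\reg(G)\geq\im(G)$.
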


\section{An approximation of the set $\RV(n)$}\label{sec:3}
In this section, our main aim is to give an approximate range of the set $\RV(n)$. We first establish a sharp upper bound of $\v(G)$ using edge domination number of $G$ and the number of vertices in $G$. Finally, we identify two subsets $A(n)$ and $B(n)$ of $\mathbb{N}^2$, and show that $A(n)\subseteq \RV(n)\subseteq B(n)$. Let us start with the definition of edge domination number of a graph.

\begin{definition}
    The minimum size of a maximal matching in a graph $G$ is known as the \textit{edge domination number} of $G$, which is denoted by $\gamma_{e}(G)$. Thus, we have $\gamma_{e}(G)\leq m(G)$ for any graph $G$.
\end{definition}
The following gives a relation between the $\v$-number and edge domination number.

\begin{proposition}\label{prop:v<=mat}
    Let $G$ be a connected graph on $n\geq 3$ vertices. Then 
    $$\v(G)\leq \min\left\{\gamma_{e}(G),\left\lfloor\frac{n-1}{2}\right\rfloor\right\}.$$
    Hence, we have $\v(G)\leq \min\{m(G),\lfloor\frac{n-1}{2}\rfloor\}$.
\end{proposition}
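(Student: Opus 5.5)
The plan is to prove the two inequalities $\v(G)\le\gamma_e(G)$ and $\v(G)\le\lfloor\frac{n-1}{2}\rfloor$ separately, both directly from the combinatorial description in \Cref{def:v}. The first is a greedy construction of an independent set along a maximal matching. The second reduces to the first after deleting a closed neighbourhood. The final assertion then follows because $\gamma_e(G)\le m(G)$.

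\textbf{Step 1: $\v(G)\le\gamma_e(G)$.} Let $M=\{e_1,\ldots,e_k\}$ be a maximal matching with $k=\gamma_e(G)$. By maximality, $V(M)=\bigcup_i e_i$ is a vertex cover of $G$. Build $A$ greedily, starting from $A=\emptyset$ and running $i=1,\ldots,k$:
\begin{itemize}
\item if some endpoint $x$ of $e_i$ lies outside $A\cup N_G(A)$, add that $x$ to $A$ (pick just one such endpoint);
\item otherwise leave $A$ unchanged.
\end{itemize}
Each step preserves the required properties:
\begin{itemize}
\item An added vertex $x$ is not adjacent to $A$, so $A$ stays independent.
\item After step $i$, both endpoints of $e_i$ lie in $A\cup N_G(A)$, because adding $x$ puts the other endpoint into $N_G(A)$. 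Since $A\cup N_G(A)$ only grows, at the end $V(M)\subseteq A\cup N_G(A)$.
\item At most one vertex is added per edge, so $|A|\le k$.
\end{itemize}
Now suppose some edge $uv$ has $u,v\notin N_G(A)$. One endpoint, say $u$, lies in $V(M)$, hence in $A$. Then $v\in N_G(A)$, a contradiction. So $N_G(A)$ is a vertex cover.

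The set $A$ is nonempty, since the first edge forces an addition. This gives $\v(G)\le|A|\le\gamma_e(G)$. The argument never uses connectivity. I will record that it holds for any graph with at least one edge, and that for an edgeless graph one may take $A=\emptyset$ in the auxiliary use below.

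\textbf{Step 2: $\v(G)\le\lfloor\frac{n-1}{2}\rfloor$.} Since $G$ is connected with $n\ge 3$, there is a vertex $x$ with $\deg_G(x)\ge 2$. Put $G'=G-N_G[x]$, which has at most $n-3$ vertices. Apply Step 1 to $G'$ to get an independent set $A'$ of $G'$ with $N_{G'}(A')$ a vertex cover of $G'$. Its size satisfies $|A'|\le\gamma_e(G')\le m(G')\le\lfloor\frac{n-3}{2}\rfloor$, with $A'=\emptyset$ if $G'$ has no edges.

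Set $A=\{x\}\cup A'$. This is independent, since $A'$ avoids $N_G[x]$. Every edge of $G$ either meets $N_G(x)\subseteq N_G(A)$ or lies inside $G'$, where it meets $N_{G'}(A')\subseteq N_G(A)$. Hence $N_G(A)$ is a vertex cover, and
\[
\v(G)\le 1+\left\lfloor\tfrac{n-3}{2}\right\rfloor=\left\lfloor\tfrac{n-1}{2}\right\rfloor .
\]

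The main obstacle is Step 1, specifically choosing the greedy rule so that all of $V(M)$, not merely one endpoint per edge, ends up in $A\cup N_G(A)$. This is what makes the complement of $N_G(A)$ independent. The naive rule of adding an endpoint only when neither endpoint is dominated does not give this, which is why the rule above acts whenever some endpoint is undominated.
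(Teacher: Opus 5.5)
Your proof is correct. Step 1 is essentially the paper's argument. The paper takes an arbitrary maximal independent subset $A$ of $\bigcup_{e\in M}e$, and $|A|\le |M|$ is then automatic because an independent set contains at most one endpoint of each edge of $M$. Your greedy procedure just constructs one such maximal subset explicitly, so the choice rule you single out as the main obstacle needs no special care.

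Step 2 takes a genuinely different route. The paper first disposes of the cases $n$ odd or $m(G)<\frac{n}{2}$ via $\v(G)\le\gamma_{e}(G)\le m(G)\le\lfloor\frac{n-1}{2}\rfloor$. In the remaining perfect-matching case it uses connectivity to find matching edges $e_1,e_2$ and a vertex $x\in e_1$ with a neighbour in $e_2$. It then takes $\{x\}$ together with a maximal independent subset of $e_3\cup\cdots\cup e_m$ avoiding $N_G(x)$, which has size at most $1+(m-2)$.

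Your deletion of $N_G[x]$, for a vertex $x$ of degree at least $2$, avoids the parity/perfect-matching case split altogether. It reduces directly to Step 1 on a smaller, possibly disconnected, graph, and uses connectivity only to produce such a vertex. It also gives more: $\v(G)\le 1+\gamma_{e}(G-N_G[x])$ for every vertex $x$ (reading $\gamma_{e}$ of an edgeless graph as $0$). Hence $\v(G)\le 1+\lfloor\frac{n-1-\Delta(G)}{2}\rfloor$, where $\Delta(G)$ is the maximum degree, and this can be much smaller than $\lfloor\frac{n-1}{2}\rfloor$.

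The paper's version, in exchange, stays inside the matching framework of the first inequality. It also shows that graphs with a perfect matching are the only case not already covered by $\v(G)\le m(G)$.
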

\begin{proof}
    Let $M$ be a maximal matching in $G$ with $\vert M\vert=\gamma_{e}(G)$. Let us consider a maximal independent set of vertices $A$ of $G$ inside $\bigcup_{e\in M}e$. Now, let $e'=\{x_i,x_j\}$ be any edge of $G$. Then one of $x_i$ and $x_j$ belongs to $\bigcup_{e\in M}e$ as $M$ is a maximal matching in $G$. Without loss of generality, we assume $x_{i}\in \bigcup_{e\in M}e$. If $x_i\in A$, then $x_j\in N_{G}(A)$. Again, if $x_i\not\in A$, then $x_i\in N_{G}(A)$ as $A$ is a maximal independent set inside $\bigcup_{e\in M}e$. Thus, $N_{G}(A)$ is a vertex cover of $G$. Since $A$ is an independent set of $G$ and $N_{G}(A)$ is a vertex cover of $G$, by the combinatorial definition of the $\v$-number of graphs (see \Cref{def:v}), we have $\v(G)\leq \vert A\vert$. Since $A$ is an independent set of $G$ inside $\bigcup_{e\in M}e$, we have $|A|\leq |M|=\gamma_{e}(G)$, and hence, $\v(G)\leq \gamma_{e}(G)$.\par 

    Now our aim is to prove that $\v(G)\leq \lfloor\frac{n-1}{2}\rfloor$. Note that if $n$ is odd, then $m(G)\leq \lfloor \frac{n-1}{2}\rfloor$, and thus, the inequality follows from the previous inequality. So, the only case that remains to be considered is when $n$ is even and $m(G)=\frac{n}{2}$. Here the connectedness of $G$ and $n\geq 3$ part will come into play. Let $M=\{e_1,\ldots,e_m\}$ be a maximal matching in $G$ with $m =m(G)$. Since $G$ is connected, $n\geq 4$ (because $n$ is even) and $m(G)=\frac{n}{2}$, there exist at least two edges in $M$, say $e_1$ and $e_2$, and a vertex $x\in e_1$ such that $N_{G}(x)\cap e_2\neq \emptyset$. Now, choose a maximal independent set $A'$ from $e_3\bigcup e_4\bigcup\cdots\bigcup e_m$ such that $A'\cap N_{G}(x)=\emptyset$. Then it is easy to verify that $A=A'\cup\{x\}$ is an independent set of $G$ and $N_{G}(A)$ is a vertex cover of $G$. Thus, by the combinatorial definition of the $\v$-number, we get $\v(G)\leq \vert A\vert =1+\vert A'\vert\leq 1+(m-2)=m-1=\lfloor\frac{n-1}{2}\rfloor$.
    \end{proof}

\begin{remark}
    From the additivity of $\v$-number (by \cite[Proposition 3.8]{jv21} or \cite[Proposition 3.9]{ss22}) and edge domination number (by definition) over the disjoint union of graphs, we have $\v(G)\leq \gamma_{e}(G)$ for any graph $G$. Also, if $G$ is a graph such that no connected component of $G$ is a $K_2$, then $\v(G)\leq \lfloor\frac{n-1}{2}\rfloor$.
\end{remark}
\begin{lemma}\label{main}
     Let $G$ be a graph with $x,y,z\in V(G)$ such that $\{x,y,z\}$ induces a $P_3$ with $\{x,y\},\{y,z\}\in E(G)$, $\deg_G (y)= 2$ and $\deg_G (z)=1$. If $A$ is an independent set such that $N_G(A)$ is a vertex cover of $G$, then $A\bigcap\{x,y,z\}\neq \emptyset$.
\end{lemma}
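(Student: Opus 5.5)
The plan is a proof by contradiction that uses only the combinatorial description of the $\v$-number in \Cref{def:v}. Suppose $A$ is an independent set with $N_G(A)$ a vertex cover of $G$, and suppose $A\cap\{x,y,z\}=\emptyset$. We will show that the edge $\{y,z\}$ then has no endpoint in $N_G(A)$, which contradicts the assumption that $N_G(A)$ is a vertex cover.

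The steps run as follows.
\begin{enumerate}
\item Consider $z$. Since $\deg_G(z)=1$, we have $N_G(z)=\{y\}$. A vertex lies in $N_G(A)$ exactly when it has a neighbor in $A$, so $z\in N_G(A)$ would force $y\in A$. This is excluded by hypothesis, hence $z\notin N_G(A)$.
\item Consider $y$. Since $\deg_G(y)=2$ and $\{x,y\},\{y,z\}\in E(G)$, we have $N_G(y)=\{x,z\}$. Then $y\in N_G(A)$ would force $x\in A$ or $z\in A$. Neither holds, so $y\notin N_G(A)$.
\item Conclude. The edge $\{y,z\}\in E(G)$ meets $N_G(A)$ in the empty set, so $N_G(A)$ is not a vertex cover of $G$. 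This is the desired contradiction, and therefore $A\cap\{x,y,z\}\neq\emptyset$.
\end{enumerate}

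There is no real obstacle here. The only point needing care is to use the exact neighborhoods $N_G(z)=\{y\}$ and $N_G(y)=\{x,z\}$, which follow from the degree hypotheses, and the fact that a vertex belongs to $N_G(A)$ iff it has a neighbor in $A$. Independence of $A$ and the induced-$P_3$ condition are not needed beyond guaranteeing that these two edges exist.
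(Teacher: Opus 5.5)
Your proof is correct and is essentially the paper's argument in contrapositive form. The paper splits directly on whether $y\in N_G(A)$, which forces $\{x,z\}\cap A\neq\emptyset$. Otherwise $z\in N_G(A)$ must hold, which forces $y\in A$. Your two steps rule out each of these cases under the assumption $A\cap\{x,y,z\}=\emptyset$.
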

\begin{proof}
Given that $f=\{y,z\}$ is an edge of $G$. Since $N_G(A)$ is a vertex cover of $G$, we have $f\bigcap N_G(A)\neq\emptyset$.  If $y\in N_G(A)$, then $N_{G}(y)\bigcap A\neq\emptyset$. By the given conditions $\{x,y\},\{y,z\}\in E(G)$ and $\deg_G (y)= 2$, it follows that $N_{G}(y)=\{x,z\}$, and thus, $\{x,z\}\bigcap A\neq \emptyset$ in this case. Now, if $y\notin N_G(A)$, then we have $z\in N_G(A)$. Since $N_{G}(z)=\{y\}$ by the given conditions, we should have $y\in A$. This completes the proof.
\end{proof}

Next, we are going to prove our one of the main results, which gives an approximate range of the set $\RV(n)$.

\begin{theorem}\label{thm:RV(n)}
    Let $A(n)=\{(r,v)\in \mathbb{N}^{2} \mid 1\leq r<\frac{n}{2}, 1\leq v\leq r-\lceil\frac{r}{n-2r}\rceil+1\}$ and $B(n)=\{(r,v)\in\mathbb{N}^2 \mid 1\leq r<\frac{n}{2}, 1\leq v<\frac{n}{2}\}$. Then for $n\geq 3$, we have
    $$A(n)\subseteq \mathcal{RV}(n)\subseteq B(n).$$
\end{theorem}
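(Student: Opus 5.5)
The inclusion $\mathcal{RV}(n)\subseteq B(n)$ should follow directly from earlier results. Let $G$ be connected on $n\geq 3$ vertices. Then $G$ has an edge, so $\reg(G)\geq \im(G)\geq 1$ by \Cref{thm:reg lower}. Also $\v(G)\geq 1$, since $A=\emptyset$ has $N_G(A)=\emptyset$, which is not a vertex cover. \Cref{prop:reg<n/2} gives $\reg(G)<\frac{n}{2}$. \Cref{prop:v<=mat} gives $\v(G)\leq\lfloor\frac{n-1}{2}\rfloor<\frac{n}{2}$. Hence $(\reg(G),\v(G))\in B(n)$.

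For $A(n)\subseteq\mathcal{RV}(n)$, fix $(r,v)\in A(n)$ and put $s=n-2r\geq 1$. I will build a connected chordal graph from gadgets, so that \Cref{thm:reg chordal} gives $\reg=\im$. First I choose the number of centres $t=\min\{s,r\}$. I take centres $c_1,\dots,c_t$ forming a clique $K_t$. For each $i\in\{1,\dots,r\}$ I add a pendant path $c_{j(i)}-y_i-z_i$. I distribute these $r$ gadgets so that every centre carries at least one of them, and the maximum load is $L:=r-v+1$. This is possible because $\lceil r/t\rceil\leq L\leq r$: the lower bound is the hypothesis $v\leq r-\lceil r/s\rceil+1$ when $t=s$, and it is trivial when $t=r$. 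If $t<s$, the remaining $s-t$ vertices become leaves attached to the centre $c_1$, which has load $L$ after relabeling. The total number of vertices is $t+2r+(s-t)=n$, and the graph is connected and chordal.

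Next I compute the regularity. The edges $\{y_i,z_i\}$ form an induced matching, so $\im\geq r$. For $\im\leq r$, I do an exchange argument. An induced matching contains at most one edge from each gadget. It contains at most one clique edge, because any two clique edges are joined by an edge. An edge containing a centre $c$ (a clique edge, a leaf edge, or an edge $\{c,y_i\}$) is incompatible with every edge $\{y_k,z_k\}$ of a gadget hung at $c$. Each centre carries at least one gadget, so replacing such an edge by a gadget edge at one of its endpoint centres never decreases the size. Thus $\im=r$ and $\reg(G)=r$.

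Finally I compute the $\v$-number, which I expect to be the main point. For the lower bound, each triple $\{c_{j(i)},y_i,z_i\}$ satisfies the hypotheses of \Cref{main}, so any admissible $A$ meets every gadget. Since the centres form a clique, $A$ contains at most one centre $c$. That centre covers at most $L$ gadgets, so $|A|\geq r-L+1=v$; and if $A$ contains no centre, then $|A|\geq r\geq v$. For the upper bound, I take $c$ to be the centre of load $L$ and set $A=\{c\}\cup\{y_i : j(i)\neq c\}$. I then check that $A$ is independent and that $N_G(A)$ contains all other centres, all $z_i$ with $j(i)\ne c$, all $y_i$ with $j(i)=c$, and the leaves. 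This covers every edge, so $\v(G)=v$. The delicate step is the $\im\leq r$ exchange argument, which is exactly where the condition that every centre carries a gadget (and hence the choice $t=\min\{s,r\}$) is needed.
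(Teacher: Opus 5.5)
\textbf{There is a gap in the construction.} Your proof of $\mathcal{RV}(n)\subseteq B(n)$ is fine and matches the paper's. Your $\im$ and $\mathrm{v}$-number computations are also correct for the graph you describe. The problem is that this graph does not exist in general.

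If each of the $t$ centres carries at least one gadget and one centre carries $L=r-v+1$ of them, the other $t-1$ centres must share the remaining $r-L=v-1$ gadgets with at least one each. So you need $t\le v$; the condition $\lceil r/t\rceil\le L\le r$ you cite is not enough. With your choice $t=\min\{s,r\}$ this fails whenever $v<\min\{s,r\}$:
\begin{enumerate}
\item If $s\ge r$, then $t=r$ forces every load to be $1$. So you only reach $v=r$, and the whole range $v<r$ (the paper's Case~I) is missed.
\item Concretely, $(2,1)\in A(7)$, $(3,1)\in A(8)$ and $(3,1)\in A(10)$ admit no such distribution.
\end{enumerate}
You also cannot just drop the requirement that every centre be loaded. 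Two unloaded clique centres $c_2,c_3$ give an edge $\{c_2,c_3\}$ that is compatible in an induced matching with all the edges $\{y_i,z_i\}$. For $(3,1)\in A(10)$ with all three gadgets on $c_1$, this gives $\im(G)\ge 4$ and hence $\reg(G)\ge 4$. Your last sentence identifies the right delicate point (every centre must be loaded), but the choice of $t$ does not secure it.

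\textbf{The repair is easy.} Take the number of clique centres to be $t=\min\{s,v\}$; any $t$ with $\lceil r/L\rceil\le t\le\min\{s,v\}$ works. Hang the other $s-t$ vertices as leaves on the heavy centre. This range of $t$ is nonempty:
\begin{enumerate}
\item $sL\ge r$ is exactly your hypothesis $v\le r-\lceil r/s\rceil+1$;
\item $vL-r=(v-1)(r-v)\ge 0$.
\end{enumerate}
With this choice the distribution exists, and the rest of your argument (chordality, the exchange argument for $\im(G)\le r$, \Cref{main} for the lower bound, and the explicit set $A$) goes through unchanged. This gives a single chordal family covering both regimes.

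This is effectively what the paper does. In its Case~II the unloaded centres ($t_i=0$) are attached only to $x_1$, not to the clique. In its Case~I the $v$ loaded centres hang from $x_1$ in a tree, with regularity obtained via \Cref{Dao} and \Cref{thm:reg forest} instead of the chordal formula.
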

\begin{proof}
    Let $G$ be a connected graph on $n\geq 3$ vertices. Then we have $\reg(G)<\frac{n}{2}$ by \Cref{prop:reg<n/2}. Again, due to \Cref{prop:v<=mat}, one can see that $\v(G)<\frac{n}{2}$. Thus, we have $\mathcal{RV}(n)\subseteq B(n)$.\par 

    Now, let us take $(r,v)\in A(n)$. We have to show that there exists a connected graph $G$ on $n$ vertices with $\reg(G)=r$ and $\v(G)=v$. Consider the following cases depending on the values of $n$ and $r$:
    \\
    \noindent \textbf{Case-I.} If $n-2r\geq r$, then $r-\left\lceil\frac{r}{n-2r}\right\rceil+1=r$. This gives 
    $$\min\left\{r,~r-\left\lceil{\frac{r}{n-2r}}\right\rceil+1\right\}=r.$$
    Let $G$ be a graph with the vertex set
    $$V(G)=\{y_1,y_2,\ldots,y_r,z_1,z_2,\ldots,z_r\}\bigcup\{x_{1},\ldots,x_v\}\bigcup\{w_1,\ldots,w_{n-2r-v}\}.$$
    Since $(r,v)\in A(n)$, we have $v\leq r$. Then we can consider the edge set as
    \begin{align*}
        E(G)= &\{\{x_1,x_i\}~|~i=2,\ldots,v\}\bigcup \{\{x_{1},y_i\}~|~i=1,\ldots,r-v+1\}\\ & \hspace{1cm} \bigcup \{\{x_{i},y_{r-v+i}\}~|~i=2,\ldots,v\} \bigcup \{\{y_i,z_i\}~|~i=1,\ldots,r\} \\ & \hspace{3cm} \bigcup \{\{x_{1},w_i\}~|~i=1,\ldots,n-2r-v\}.
    \end{align*}
    \begin{figure}[h]
    \centering
    \begin{tikzpicture}
        [scale=.55]
        \draw [fill, color=blue] (-12,4) circle [radius=0.1];  
        \node at (-12.4,4) {$y_1$};
        \draw [fill, color=blue] (-12,2) circle [radius=0.1]; 
         \node at (-12.4,2) {$z_1$};
        \draw [fill, color=blue] (-10,4) circle [radius=0.1];
         \node at (-10.4,4) {$y_2$};
        \draw [fill, color=blue] (-10,2) circle [radius=0.1];
         \node at (-10.4,2) {$z_2$};
        \node at (-8.5,3) {$\cdots$};
        \draw [fill, color=blue] (-7,4) circle [radius=0.1]; 
         \node at (-7.7,4) {$y_{r-v+1}$};
        \draw [fill, color=blue] (-7,2) circle [radius=0.1]; 
        \node at (-7.7,2) {$z_{r-v+1}$};
        \draw (-7,4)--(-7,2);
        \draw (-10,4)--(-10,2);
        \draw (-12,4)--(-12,2);
        \draw [fill, color=blue] (-5,4) circle [radius=0.1];
         \node at (-5.9,4) {$y_{r-v+2}$};
         \node at (-5.9,2) {$z_{r-v+2}$};
        \draw [fill, color=blue] (-5,2) circle [radius=0.1]; 
        \draw (-5,4)--(-5,2);
        \node at (-3.5,3) {$\cdots$};
        \draw [fill, color=blue] (-2,4) circle [radius=0.1]; 
        \draw [fill, color=blue] (-2,2) circle [radius=0.1]; 
        \node at (-2.5,4) {$y_r$};
         \node at (-2.5,2) {$z_r$};
        \draw (-2,4)--(-2,2);
       \draw [fill, color=green] (-2,7) circle [radius=0.1];
       \node at (-2,7.3) {$x_v$};
       \draw (-2,7)--(-2,4);
       \node at (-3.5,7) {$\cdots$};
    \draw [fill, color=green] (-5,7) circle [radius=0.1];
    \node at (-5,7.3) {$x_2$};
    \draw [fill, color=green] (-7,7) circle [radius=0.1];
    \node at (-7.5,7.2) {$x_1$};
    
    \draw (-5,7)--(-5,4);
       \draw (-7,7)--(-7,4);
       \draw (-2,7) to[bend right=40] (-7,7);
       \draw (-7,7)--(-12,4);
       \draw (-5,7) to[bend right=40] (-7,7);
       \draw (-7,7)--(-10,4);
       \draw [fill, color=black] (-5.5,9) circle [radius=0.1];
       \node at (-4.7,9.3) {$w_{n-2r-v}$};
       \node at (-7,9) {$\cdots$};
       \draw [fill, color=black] (-8.5,9) circle [radius=0.1];
       \node at (-8.5,9.3) {$w_1$};
       \draw (-5.5,9)--(-7,7);
       \draw (-8.5,9)--(-7,7);
    \end{tikzpicture}
    \caption{A tree $G$ with $n-2r\geq r$ and $(\reg(G),\v(G))=(r,v)$}
    \label{fig:tree:v<r}
\end{figure}
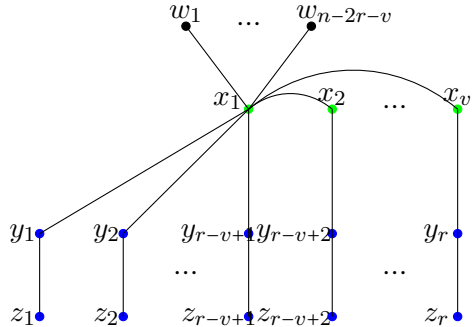
    Note that $G$ is a tree on $n$ vertices (see \Cref{fig:tree:v<r}) as there is no induced cycle in $G$. It is clear that $\{\{y_i,z_i\}~|~i=1,\ldots,r\}$ is an induced matching of $G$. Thus, by \Cref{thm:reg lower}, we have $\reg(G)\geq\im(G)\geq r$. Now, from the structure of the graphs $G-x_1$ and $G-N_{G}[x_1]$, one can easily obtain that $\im(G-x_1)=r$ and $\im(G-N_{G}[x_1])=v-1$. Since both $G-x_1$ and $G-N_{G}[x_1]$ are forests, $\reg(G-x_1)=r$ and $\reg(G-N_{G}[x_1])=v-1$ by \Cref{thm:reg forest}. Therefore, using \Cref{Dao}, we get  $\reg(G)\leq\max\{r,v-1+1\}=r$. Hence, $\reg(G)=r$. Next, we observe that $\{x_1,y_{r-v+2},\ldots,y_r\}$ is an independent set of $G$ whose neighbor set is a vertex cover of $G$. Then by \Cref{def:v}, we have $\mathrm{v}(G)\leq |\{x_1,y_{r-v+2},\ldots,y_r\}|= v$. For the reverse inequality, let $A$ be an independent set corresponding to the $\mathrm{v}$-number of $G$. Then $A$ is an independent set of $G$ such that $N_{G}(A)$ is a vertex cover of $G$ and $|A|=\v(G)$. From the structure of $G$, we see for each $i=1,\ldots,v$ that $B_i=\{x_{i},y_{r-v+i},z_{r-v+i}\}$ induces a $P_3$ as a subgraph of $G$ with $\deg_{G}(y_{r-v+i})=2$ and $\deg_{G}(z_{r-v+i})=1$. Therefore, by \Cref{main}, we have $B_i\bigcap A\neq \emptyset$ for each $i=1,\ldots,v$. Since $B_i\bigcap B_j=\emptyset$ for $i\neq j$, it follows that $|A|\geq v$. Hence, $\mathrm{v}(G)\geq v$ and consequently, $\v(G)=v$.\par 
    
     \noindent \textbf{Case-II.} Let $n-2r<r$. Then we have
    $$\min\left\{r,~r-\left\lceil{\frac{r}{n-2r}}\right\rceil+1\right\}=r-\left\lceil{\frac{r}{n-2r}}\right\rceil+1.$$ 
    Let $v=r-\left\lceil{\frac{r}{n-2r}}\right\rceil+1-p$ for some $0\leq p\leq r-\left\lceil{\frac{r}{n-2r}}\right\rceil$. We have to show that there exists a connected graph $G$ on $n$ vertices with $\reg(G)=r$ and $\v(G)=v$. Let us take $G$ to be a graph with the vertex set $$V(G)=\bigcup_{i=1}^{n-2r}\{y_{i1},y_{i2},\ldots,y_{it_i},z_{i1},z_{i2},\ldots,z_{it_i}\}\bigcup \{x_i~|~i=1,\ldots,n-2r\},$$
    where some $t_i$ may be $0$, and by $t_i=0$, we mean no such $y_{ij}$ and $z_{ij}$ exists for that particular $i$. Now, we consider the following edge set 
    \begin{align*}
        E(G) & =\bigcup_{i=1}^{n-2r}\bigcup_{j=1}^{t_i}\{\{x_i,y_{ij}\}, \{y_{ij},z_{ij}\}\}\bigcup \{\{x_1,x_i\}\mid i\in [n-2r] \text{ and }t_i=0\}\\
        &\hspace{4cm}\bigcup\{\{x_i,x_j\}~|~ i,j\in [n-2r] \text{ and } t_i\neq 0, t_j\neq 0\},
    \end{align*}
    where $\sum_{i=1}^{n-2r}t_i=r$ with $t_1=\lceil\frac{r}{n-2r}\rceil+p$ and $0\leq t_i\leq t_1$ for each $i=2,\ldots,n-2r$. In \Cref{fig:rv2}, we illustrate the structure of the graph $G$ defined for this case, where $t_{i}\neq 0$ for all $i\in [q]$ and $t_i=0$ for all $i\in\{q+1,\ldots,n-2r\}$.
    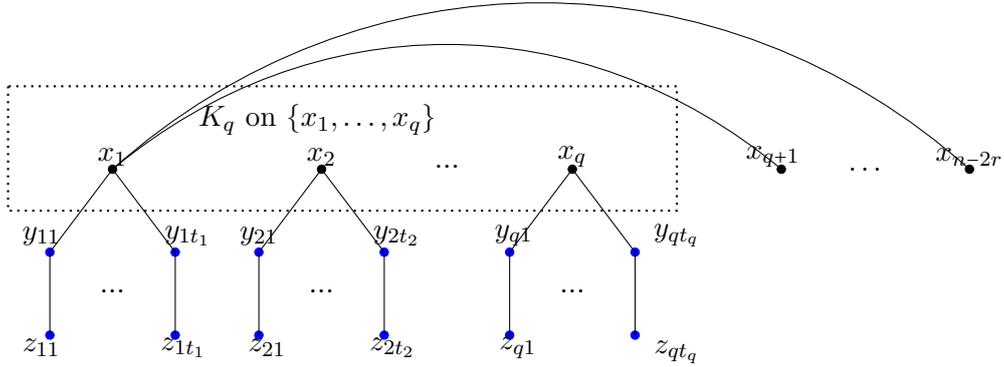
\begin{figure}[h]
    \centering
    \begin{tikzpicture}
        [scale=.55]
    \draw [fill, color=blue] (1,2) circle [radius=0.1];  
    \node at (0.8,2.4) {$y_{11}$};
    \draw [fill, color=blue] (1,0) circle [radius=0.1]; 
    \node at (0.8,-0.3) {$z_{11}$};
     
    \draw (1,0)--(1,2);
    \draw [fill, color=black] (2.5,4) circle [radius=0.1];   \node at (2.5,4.3) {$x_1$};
     \draw [fill, color=blue] (4,2) circle [radius=0.1];   
    \draw [fill, color=blue] (4,0) circle [radius=0.1];
    \node at (4.2,-0.3) {$z_{1t_1}$};
     \node at (4.3,2.4) {$y_{1t_1}$};
     \draw (4,0)--(4,2); 
     \draw(2.5,4)--(4,2);
     \draw(2.5,4)--(1,2);
     \node at (2.5,1) {$\cdots$};
     
     \draw [fill, color=blue] (6,2) circle [radius=0.1];   
    \draw [fill, color=blue] (6,0) circle [radius=0.1]; 
    \node at (6.2,-0.3) {$z_{21}$};
     \node at (6.0,2.4) {$y_{21}$};
    \draw (6,0)--(6,2);
    \draw [fill, color=black] (7.5,4) circle [radius=0.1];   
     \draw [fill, color=blue] (9,2) circle [radius=0.1];   
    \draw [fill, color=blue] (9,0) circle [radius=0.1];
     \node at (9.2,-0.3) {$z_{2t_2}$};
     \node at (9.3,2.4) {$y_{2 t_2}$};
     \draw (9,0)--(9,2); 
     \draw(7.5,4)--(6,2);
     \draw(7.5,4)--(9,2);
     \node at (7.5,4.3) {$x_2$};
     \node at (7.5,1) {$\cdots$};
     \node at (10.5,4) {\textbf{$\cdots$}};
      \draw [fill, color=blue] (12,2) circle [radius=0.1];   
    \draw [fill, color=blue] (12,0) circle [radius=0.1];
    \node at (12.2,-0.3) {$z_{q1}$};
     \node at (12.1,2.4) {$y_{q1}$};
    \draw (12,2)--(12,0);
    \draw [fill, color=black] (13.5,4) circle [radius=0.1];
    \node at (13.5,4.3) {$x_{q}$};
    \draw [fill, color=blue] (15,0) circle [radius=0.1];
    \draw [fill, color=blue] (15,2) circle [radius=0.1];
    \node at (16.0,-0.4) {$z_{qt_{q}}$};
     \node at (16.0,2.4) {$y_{qt_{q}}$};
    \draw(15,0)--(15,2);
    \draw (15,2)--(13.5,4);
    \draw (12,2)--(13.5,4);
     \node at (13.5,1) {$\cdots$};
      \draw[dotted, thick] (0,6) rectangle (16,3);
       \node at (7.4,5.2)  {$K_{q}$ on $\{x_1,\ldots,x_{q}\}$};
    \node at (18.3,4.3) {$x_{q+1}$};
     \draw [fill] (18.5,4.0) circle [radius=0.1]; 
     \draw (18.5,4.0) to[bend right=40] (2.5,4);
     \node at (20.5,4.0) {$\ldots$};
     \draw [fill] (23,4.0) circle [radius=0.1]; 
     \node at (23,4.3) {$x_{n-2r}$};
      \draw (23,4.0) to[bend right=42] (2.5,4);
    \end{tikzpicture}
    \caption{A chordal graph $G$ with $n-2r<r$ and $(\reg(G),\v(G))=(r,v)$}
    \label{fig:rv2}
\end{figure}
    Then observe that $G$ is a connected chordal graph on $n$ vertices and $t_1\geq 2$. Again, for each $0\leq p\leq r-\left\lceil{\frac{r}{n-2r}}\right\rceil$, the existence of such $G$ is guaranteed due to the fact that $(n-2r)\left\lceil{\frac{r}{n-2r}}\right\rceil\geq r$. Note that the set of edges $M=\bigcup_{i=1}^{n-2r}\{\{y_{ij},z_{ij}\}~|~1\leq j\leq t_i\}$ is clearly a maximal induced matching of the graph $G$. Therefore, we have
    $$\im(G)\geq |M|=\sum_{i=1}^{n-2r}t_i=r.$$
    For the reverse inequality, let $M'$ be an induced matching of $G$ with maximum cardinality, i.e., $|M'|=\im(G)$. If $M'$ does not contain any edge $e$ such that $x_i\in e$ for some $1\leq i\leq n-2r$, then the only choice of $M'$ is $M$. In this case, $\im(G)=r$. Now, suppose $M'$ contains an edge $e$ such that $x_i\in e$ for some $i\in\{1,\ldots,n-2r\}$. If $t_i=0$, then the only possibility is $e=\{x_1,x_i\}$ and no other edge of $M'$ contains any $x_k$ by our choice of $E(G)$. In this case, we should have $\bigcup_{i=2}^{n-2r}\{\{y_{ij},z_{ij}\}~|~1\leq j\leq t_i\}\subseteq
    M'$ and $\{y_{1j},z_{1j}\}\notin M'$ for all $1\leq j\leq t_1$. Then $M''=(M'\setminus \{e\})\bigcup \{\{y_{1j},z_{1j}\}\mid 1\leq j\leq t_1\}$ is an induced matching of $G$. Since $t_1\geq 2$, we have $|M''|>|M'|=\im(G)$, which is not possible. Therefore, we can assume $t_i\neq 0$. Then either $e=\{x_i,x_j\}$ for some $j\in [n-2r]\setminus\{i\}$ or $e=\{x_i,y_{ij}\}$ for some $j\in [t_i]$. In this case, we also see that $M'$ does not contain any other edge containing any $x_k$, because the induced subgraph of $G$ on the vertex set $\bigcup_{i=1}^{n-2r}\{x_i,y_{ij}\mid 1\leq j\leq t_i\}$ has induced matching number $1$ (specifically, it is a complete graph with some whiskers attached). Thus, considering the induced matching $M''=(M'\setminus \{e\})\cup \{\{y_{ij},z_{ij}\}\mid 1\leq j\leq t_i\}$, we can see that $M''\subseteq M$ as $M$ is a maximal induced matching containing $\bigcup_{i=1}^{n-2r}\{\{y_{ij},z_{ij}\}~|~1\leq j\leq t_i\}$. Thus, we have $r=|M|\geq |M''|\geq |M'|=\im(G)$ as $t_i\geq 1$. Hence, we finally get $\im(G)=r$. Since $G$ is a chordal graph, $\reg(G)=\im(G)=r$ by \Cref{thm:reg chordal}. Next, our aim is to show that $\v(G)=v$. Let us take the independent set $B=(\bigcup_{i=2}^{n-2r}\{y_{ij}~|~1\leq j\leq t_i\})\bigcup\{x_1\}$ of $G$. Then, one can easily verify that $N_G(B)$ is a vertex cover of $G$. Therefore, by \Cref{def:v}, we have $$\mathrm{v}(G)\leq |B|=\sum_{i=2}^{n-2r}t_i+1=r-\left\lceil\frac{r}{n-2r}\right\rceil-p+1.$$ 
    For the reverse inequality, let $A$ be an independent set of $G$ corresponding to $\mathrm{v}(G)$, i.e., $A$ is an independent set of $G$ with $N_{G}(A)$ as a vertex cover of $G$ and $|A|=\v(G)$. Since $\{x_1,\ldots,x_{n-2r}\}$ induces a complete graph with a possibility of some whiskers attached at $x_1$, the set $A$ contains at most one $x_i$. If possible, let $x_i\notin A$ for all $i=1,\ldots,n-2r$. For each $i\in\{1,\ldots,n-2r\}$ with $t_i\neq 0$, consider the collection of sets $B_{ij}=\{x_i,y_{ij},z_{ij}\}$, where $j=1,\ldots,t_i$. Note that $B_{ij}$ induces a $P_3$ as a subgraph of $G$ with $\deg_{G}(y_{ij})=2$ and $\deg_{G}(z_{ij})=1$. Then it follows from \Cref{main} that either $y_{ij}\in A$ or $z_{ij}\in A$ as $x_i\notin A$. This gives $\mathrm{v}(G)=|A|\geq\sum_{i=1}^{n-2r}t_i=r$, which is not possible as we already have $\mathrm{v}(G)\leq r-\lceil \frac{r}{n-2r}\rceil-p+1$ and $\lceil\frac{r}{n-2r}\rceil\geq 2$. Therefore, $A$ must contain one $x_i$ for some $i\in\{1,\ldots,n-2r\}$, and moreover using the previous argument, we can say $t_i\neq 0$.  Consider $x_m\in A$ for some $m\in\{1,\ldots,n-2r\}$ with $t_m\neq 0$. Since $B_{ij}\bigcap A\neq\emptyset$ due to the \Cref{main} and $(\{x_1,\ldots,x_{n-2r}\}\setminus\{x_m\})\cap A=\emptyset$, we should have $\{y_{ij},z_{ij}\}\bigcap A\neq \emptyset$ for each $i\in\{1,\ldots,n-2r\}\setminus\{m\}$ with $t_i\neq 0$, where $j\in\{1,\ldots,t_i\}$ for each $i$. This gives 
    $$\mathrm{v}(G)=|A|\geq 1+\sum_{i=1}^{n-2r}t_i-t_m\geq r-t_1+1=r-\left\lceil\frac{r}{n-2r}\right\rceil-p+1.$$ 
    Hence, $\mathrm{v}(G)=r-\lceil\frac{r}{n-2r}\rceil-p+1$ and this completes the proof.
\end{proof}

\section{The set $\RV_{W}(n)$: Restriction of $\RV(n)$ to whisker graphs}\label{sec:whisker graph} 

In this section, we explicitly find all the lattice points arising as a pair $(\reg(G),\v(G))$, where $G$ ranges over all connected whisker graph on a fixed number of vertices. In particular, we find the subset $\RV_{W}(n)$ of $\RV(n)$. We begin with the definition of whisker graph and some known results.

\begin{definition}
    Let $G$ be a graph with $V(G)=\{x_1,\ldots,x_m\}$. Consider the graph $W_G$ by adding a new set of vertices $Y=\{y_1,\ldots,y_m\}$ to $G$ and attaching the edges $\{x_i,y_i\}$ to $G$ for each $1\leq i\leq m $. The graph $W_G$ is known as the \textit{whisker} graph of $G$, and the attached edges $\{x_i,y_i\}$ are called the \textit{whiskers}. Note that a whisker graph always have even number of vertices.
\end{definition}

A whisker graph is a very well-covered graph satisfying the conditions of \cite[Lemma 4.1]{mmcrty11}. Thus, by \cite[Theorem 4.12]{mmcrty11}, we have the following.
\begin{theorem}\label{thm:reg whisker}
   Let $G$ be a simple connected graph and $W_G$ be the whisker graph of $G$. Then we have $\reg(W_G)= \im(W_G)$. Moreover, one can see that $\im(W_G)=\alpha(G)$.
\end{theorem}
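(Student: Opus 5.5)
The first assertion, $\reg(W_G)=\im(W_G)$, is taken directly from \cite[Theorem 4.12]{mmcrty11}: I would only check that $W_G$ is very well-covered and satisfies the hypotheses of \cite[Lemma 4.1]{mmcrty11}, as noted before the statement. What remains is the purely combinatorial equality $\im(W_G)=\alpha(G)$, which I would prove by two inequalities. Throughout I use that each $y_i$ is a leaf of $W_G$ with $N_{W_G}(y_i)=\{x_i\}$. Consequently every edge of $W_G$ is either a whisker $\{x_i,y_i\}$ or an edge $\{x_i,x_j\}$ of $G$.

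For $\im(W_G)\geq\alpha(G)$, I take a maximum independent set $S$ of $G$ and set $M=\{\{x_i,y_i\}\mid x_i\in S\}$. These whiskers are pairwise disjoint. Suppose some edge of $W_G$ other than these has both endpoints in $\bigcup_{e\in M}e$. It cannot involve a $y_i$, since the only neighbor of $y_i$ is $x_i$. So it would be an edge $\{x_i,x_j\}$ of $G$ with $x_i,x_j\in S$, contradicting independence. Hence $M$ is an induced matching of size $\alpha(G)$.

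For $\im(W_G)\leq\alpha(G)$, I take a maximum induced matching $M$ of $W_G$. Every edge of $W_G$ contains at least one vertex of $G$, so I choose one such vertex $x(e)\in e\cap V(G)$ for each $e\in M$. The chosen vertices are distinct because the edges of $M$ are pairwise disjoint. Suppose two of them, $x(e)$ and $x(e')$ with $e\neq e'$, were adjacent in $G$. Then $\{x(e),x(e')\}$ would be an edge of $W_G$ contained in $\bigcup_{f\in M}f$. It is not an edge of $M$, since it meets both $e$ and $e'$, which are disjoint edges of $M$. This contradicts $M$ being induced. Therefore $\{x(e)\mid e\in M\}$ is an independent set of $G$ of size $|M|$, and so $\alpha(G)\geq\im(W_G)$.

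The only delicate point is the second inequality, namely that the selection $e\mapsto x(e)$ yields an independent set. The argument above handles this using the inducedness of $M$. Everything else is routine.
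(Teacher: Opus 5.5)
Your proposal is correct and matches the paper: the paper also gets $\reg(W_G)=\im(W_G)$ by citing \cite[Theorem 4.12]{mmcrty11}, after noting that $W_G$ is very well-covered and satisfies the conditions of \cite[Lemma 4.1]{mmcrty11}. The paper leaves $\im(W_G)=\alpha(G)$ as ``one can see'', and your two inequalities (whiskers at a maximum independent set of $G$ for $\geq$, and one vertex of $G$ chosen from each edge of an induced matching for $\leq$) correctly fill in that step.
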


\begin{theorem}\cite[Theorem 4.12]{ss22}\label{thm:whisker vim}
    Let $G$ be a simple connected graph and $W_G$ be the whisker graph of $G$. Then $\mathrm{v}(W_G)\leq \im(W_G)$.
\end{theorem}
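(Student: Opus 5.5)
We have to prove $\v(W_G)\leq \im(W_G)$ for a connected graph $G$. Since $\im(W_G)=\alpha(G)$ by \Cref{thm:reg whisker}, it suffices to find an independent set $A$ of $W_G$ with $|A|\leq\alpha(G)$ and $N_{W_G}(A)$ a vertex cover of $W_G$. Then \Cref{def:v} gives the bound. The most natural candidate is a maximal independent set $S$ of $G$, and we will show that any maximum one does the job.

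\textbf{Step 1: choose $A$.} Let $S\subseteq V(G)=\{x_1,\ldots,x_m\}$ be a maximum independent set of $G$, so $|S|=\alpha(G)$. No edge of $W_G$ joins two vertices of $S$: edges of $G$ are excluded by independence, and whiskers join an $x_i$ to a $y_i$. Hence $S$ is independent in $W_G$.

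\textbf{Step 2: check the vertex cover property edge by edge.}
\begin{itemize}
\item For an edge $\{x_i,x_j\}$ of $G$, at most one endpoint lies in $S$. If $x_i\in S$, then $x_j\in N(S)$. If neither endpoint lies in $S$, then maximality of $S$ in $G$ puts $x_i$ in $N_G(S)\subseteq N_{W_G}(S)$.
\item For a whisker $\{x_i,y_i\}$: if $x_i\in S$, then $y_i\in N_{W_G}(S)$. If $x_i\notin S$, then $x_i$ has a neighbour in $S$ by maximality, so $x_i\in N_{W_G}(S)$.
\end{itemize}
Thus $N_{W_G}(S)$ covers every edge.

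\textbf{Step 3: conclude.} We get $\v(W_G)\leq |S|=\alpha(G)=\im(W_G)$. The only point needing care is the equality $\im(W_G)=\alpha(G)$, which we take from \Cref{thm:reg whisker}. For completeness it can also be checked directly. An independent set $S$ of $G$ gives the induced matching $\{\{x_i,y_i\}: x_i\in S\}$, so $\im(W_G)\geq\alpha(G)$. Conversely, in an induced matching of $W_G$, replace each edge by a whisker at one of its $G$-endpoints; the chosen $G$-vertices are pairwise nonadjacent because the matching is induced, so $\im(W_G)\leq\alpha(G)$.

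No serious obstacle is expected. The only subtlety is remembering to use maximality of $S$, rather than independence alone, when covering edges both of whose endpoints lie outside $S$.
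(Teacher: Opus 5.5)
Your proof is correct and complete, and it never uses the connectedness of $G$. The paper does not prove this statement itself (it is imported from Saha--Sengupta), but your Step 2 is exactly the observation the paper uses in the proof of \Cref{lem:wrv}: a maximal independent set $D$ of $G$ is independent in $W_G$ and $N_{W_G}(D)$ is a vertex cover. Combined with $\im(W_G)=\alpha(G)$ from \Cref{thm:reg whisker}, which you also verify directly, this is the intended argument.
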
 

\begin{lemma}\label{lem:wrv}
    Let $G$ be a connected graph with $m\geq 2$ vertices and $W_G$ be its whisker graph. Then the following hold true.
\begin{enumerate}
    \item[(a)] $1\leq\reg(W_G)\leq m-1,$
    \item[(b)] $1\leq \mathrm{v}(W_G)\leq r-\lceil\frac{r}{m-r}\rceil+1$, where $r=\reg(W_G)$.
\end{enumerate}
\end{lemma}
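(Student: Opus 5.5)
The bound in (a) comes straight from \Cref{thm:reg whisker}, and for (b) I plan to exhibit a single small independent set of $W_G$ whose neighbourhood is a vertex cover. Throughout, write $V(G)=\{x_1,\ldots,x_m\}$ with whiskers $\{x_i,y_i\}$.

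For (a), \Cref{thm:reg whisker} gives $\reg(W_G)=\im(W_G)=\alpha(G)$. Since $G$ is connected with $m\geq 2$ vertices, it has an edge, so $V(G)$ is not independent and $\alpha(G)\leq m-1$. Also $\alpha(G)\geq 1$. For the lower bound in (b), the empty set has empty neighbourhood, which is not a vertex cover of $W_G$, so \Cref{def:v} forces $\v(W_G)\geq 1$.

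For the upper bound in (b), let $r=\alpha(G)$ and fix a maximum independent set $S$ of $G$. Put $C=V(G)\setminus S$, so $|C|=m-r\geq 1$. Since $S$ is a maximal independent set, every $s\in S$ has a neighbour in $C$. Assign each $s$ to one such neighbour. By pigeonhole, some $c\in C$ receives at least $k:=\lceil\frac{r}{m-r}\rceil$ vertices of $S$, so $|N_G(c)\cap S|\geq k$. The target is an independent set $A$ of $W_G$ with $c\in A$, with $N_{W_G}(A)$ a vertex cover, and with $|A|\leq 1+(r-k)$. The vertex $c$ then handles, in one stroke, the at least $k$ whiskers at $S\cap N_G(c)$, since those $x_s$ lie in $N(c)$.

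The natural candidate is a maximal independent set $T$ of $G$ with $T\supseteq\{c\}\cup(S\setminus N_G(c))$; note $c$ is not adjacent to $S\setminus N_G(c)$, so this set is independent. For any maximal independent set $T$ of $G$, $N_{W_G}(T)\supseteq (V(G)\setminus T)\cup\{y_t: t\in T\}$, which covers every edge of $G$ and every whisker. So $\v(W_G)\leq |T|$.

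\textbf{Main obstacle.} The hard step is controlling the vertices of $C\setminus N_G[c]$ that $T$ is forced to contain beyond $\{c\}\cup(S\setminus N_G(c))$; these are exactly what could push $|T|$ above $r-k+1$.

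\begin{itemize}
\item \emph{First attempt (exchange argument).} Take an extra vertex $u\in T\cap C$. Show that it can be traded for a vertex of $S\setminus N_G(c)$, or that $u$ lets us delete some element of $S\setminus N_G(c)$ from $T$ while keeping coverage. The reason this might work is that $u$ itself covers the whiskers and edges at its neighbours.
\item \emph{Second attempt (mix in whisker vertices).} Replace $T$ by a set of the form $\{c\}\cup\{y_s \text{ or } x_s : s\in S\setminus N_G(c)\}$. Choose between $x_s$ and $y_s$ so as to cover the remaining edges inside $C$. Then verify independence and the cover property edge by edge: whiskers, $S$--$C$ edges and $C$--$C$ edges.
\end{itemize}

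If neither attempt controls the count, I would strengthen the pigeonhole choice. Instead of an arbitrary assignment, pick $c$ maximising $|N_G(c)\cap S|$ together with a maximality condition on $N_G(c)$. This should give enough room in the count to absorb the extra vertices.
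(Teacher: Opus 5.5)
Part (a), the lower bound in (b), and your reduction ``every maximal independent set $T$ of $G$ gives $\mathrm{v}(W_G)\le |T|$'' are all correct. There is one small slip: each $s\in S$ has a neighbour in $C$ because $G$ is connected with $m\ge 2$, not because $S$ is maximal; maximality gives the converse.

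The upper bound in (b), however, is not proved. The step you call the main obstacle is the whole content of the lemma, and for the vertex your pigeonhole produces it genuinely cannot be overcome. Consider the following example.
Take $S=\{a_1,a_2,a_3,b_1,b_2\}$ and $C=\{c,c',u_1,u_2\}$, with edges $cc'$, $ca_1,ca_2,ca_3$, $c'b_1,c'b_2,c'u_1,c'u_2$, $u_1a_1$, $u_2a_2$. This $G$ is connected with $m=9$, and every vertex cover needs four vertices. So $r=\alpha(G)=5$, and the target is $5-\lceil 5/4\rceil+1=4$.
The vertex $c$ receives three vertices of $S$ and is even the unique maximiser of $|N_G(c)\cap S|$. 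Yet the only maximal independent set of $G$ containing $c$ is $\{c,u_1,u_2,b_1,b_2\}$, of size $5$.

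Whisker vertices do not help. Suppose $A$ is independent in $W_G$ and $N_{W_G}(A)$ is a vertex cover. Then every vertex of $G$ outside $N_G[A\cap V(G)]$ must have its leaf in $A$. Hence $A\cap V(G)$, together with a maximal independent subset of $V(G)\setminus N_G[A\cap V(G)]$, is a maximal independent set of $G$ of size at most $|A|$. So every admissible $A\ni c$ has $|A|\ge 5$. Your exchange attempt, the whisker-mixing attempt, and the fallback of maximising $|N_G(c)\cap S|$ all fail here. The bound is reached only through $c'$, by $\{c',a_1,a_2,a_3\}$.

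The missing idea is to choose the vertex so that it pays for the vertices it forces, not only for the vertices of $S$ it covers. The paper does this with a refined pigeonhole:
\begin{enumerate}
\item Take a minimal set $\{v_1,\dots,v_p\}\subseteq V(G)\setminus S$ covering $S$, and put $t_i=|N_G(v_i)\cap S|$.
\item Let $C_i$, of size $q_i$, be the vertices outside this set that are adjacent to $N_G(v_i)\cap S$ but to no $N_G(v_j)\cap S$ with $j\ne i$.
\item These sets are disjoint and miss the cover, so $\sum_i q_i\le m-r-p$. Counting then yields some $v_i$ with $t_i-q_i\ge\lceil r/(m-r)\rceil$.
\item Hence $\{v_i\}\cup(S\setminus N_G(v_i))$, together with the vertices of $C_i$ not adjacent to $v_i$, has at most $r-\lceil r/(m-r)\rceil+1$ elements.
\end{enumerate}
In the example above, $c$ forces $u_1,u_2$ while $c'$ forces nothing, so $c'$ is selected.

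Even this bookkeeping needs care. A vertex whose only neighbour is a common neighbour of $v_1$ and $v_2$ is forced by $v_1$ but lies in no $C_i$, so the paper's set $D$ need not dominate it.

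A complete argument must also use that $S$ is maximum, not merely maximal, for instance through the condition $|N_G(I)\cap S|\ge |I|$ for every independent $I\subseteq V(G)\setminus S$. The analogous bound is false for a merely maximal $S$. Take a triangle $c_1c_2c_3$ whose vertices share a neighbour $s_0$, with two leaves at each $c_j$ and two leaves at $s_0$. Then $s_0$ together with the six leaves at the $c_j$ is a maximal independent set of size $7$ in a graph on $12$ vertices, giving $7-\lceil 7/5\rceil+1=6$. Yet every maximal independent set of this graph has at least $7$ vertices.

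Your proposal never uses this distinguishing property, which is a further sign that the key step is still missing.
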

\begin{proof}
    (a) Since $W_G$ has $2m$ vertices, it follows from \Cref{prop:reg<n/2} that $\reg(W_G) \leq m-1$. Also, $\reg(W_G)\geq 1$ is obvious.\par 
    
    (b) Let $G$ be a connected graph with $|V(G)|=m$ and $W_G$ be its whisker graph. Then $\reg(W_G)=\im(W_G)=\alpha(G)=r$ by \Cref{thm:reg whisker}. If $r\leq m-r,$ then $r-\lceil\frac{r}{m-r}\rceil+1=r$, and we have $\v(W_G)\leq r$ by \Cref{thm:whisker vim}. Now, let us consider the case $r>m-r$. Then by division algorithm, we can write $r=(m-r)k+s$ for some $k\geq 1$ and $0\leq s<m-r$. Let $B$ be a maximal independent set of $G$ with cardinality $r$ and $A=V(G)\setminus B$. Consider a set $C=\{v_1,\ldots,v_p\}\subseteq A$, which minimally covers $B$, i.e., $B\subseteq N_{G}(C)$ and $C$ is minimal with this property. It is clear that $|C|=p\leq |A|= m-r$. Suppose $v_i$ covers $t_i$ many vertices of $B$ (say $B_i$) and $C_i=(N_A(B_i)\setminus\bigcup_{j\neq i}N_A(B_j))\setminus\{v_1,\ldots,v_p\}$ for $i=1,\ldots,p$. Set $|C_i|=q_i$ and note that $\sum_{i=1}^{p}q_i\leq m-r-p$. Now, we consider two possible cases:\\
    \noindent\textbf{Case-I.} Suppose $s>0$. If possible, let $t_i\leq k+q_i$ for each $i=1,\ldots,p$. Since each $v_i$ covers $t_i$ many vertices of $B$, $C$ covers at most $\sum_{i=1}^{p}t_i$ vertices of $B$. Now, we have 
    \begin{align*}
        \sum_{i=1}^{p}t_i &\leq kp+\sum_{i=1}^{p}q_i\\
         &\leq kp+m-r-p\\
         &=m-r+(k-1)p\\
         &\leq (m-r)+(k-1)(m-r)\quad (\text{since } p\leq m-r)\\
         &= (m-r)k<r\quad (\text{as } r=(m-r)k+s \text{ and } s>0).
    \end{align*}
    This shows that $C$ covers at most $r-1$ many vertices of $B$, which contradicts the fact that $C$ minimally covers $B$. Therefore, there exists a $v_i\in C$ such that $t_i\geq (k+1)+q_i$. Consider the set $D=\{v_i\}\bigcup\{x\in C_i~|~x\notin N_A(v_i)\}\bigcup (B\setminus B_i)$. Note that $D$ is a maximal independent set of $G$ and 
    \begin{align*}
        |D|&\leq 1+|C_i|+|B\setminus B_i|\\
        &=1+q_i+r-t_i\\
        & \leq 1+q_i+r-(k+1)-q_i\\
        &=1+r-(k+1)\\
        &=r-\left\lceil\frac{r}{m-r}\right\rceil+1.
    \end{align*}
    The last equality is due to the fact that $r=(m-r)k+s$ with $s>0$. It is clear that $D$ is an independent set of $W_G$ and $V(G)\setminus D\subseteq N_{W_G}(D)$ as $D$ is a maximal independent set of $G$. Again, $N_{W_G}(D)$ contains the pendant vertices attached to the vertices in $D$. Thus, by the construction of $W_G$, we can see that $N_{W_G}(D)$ is a minimal vertex cover of $W_G$. Therefore, $\mathrm{v}(W_G)\leq |D|\leq r-\lceil\frac{r}{m-r}\rceil+1$ by \Cref{def:v}.\\
   \noindent \textbf{Case-II.} Suppose $s=0$. If possible, let $t_i<k+q_i$ for all $1\leq i\leq p$. It is clear that $C$ covers at most $\sum_{i=1}^{p}t_i$ vertices in $B$. Now, we have 
\begin{align*}
            \sum_{i=1}^{p}t_i & <kp+\sum_{i=1}^{p}q_i\\
            &\leq kp+(m-r-p)\\
            &=(m-r)+(k-1)p\\
            &\leq (m-r)k \quad (\text{since } p\leq m-r)\\
            &=r \quad (\text{as } r=(m-r)k+s \text{ and } s=0)
\end{align*}
Hence, $C$ covers at most $r-1$ vertices in $B$, which is a contradiction to the fact that $C$ covers $B$. Therefore, there exists a $v_i\in C$ such that $t_i\geq k+q_i$ for some $i$. Similar to the previous case, we consider $D=\{v_i\}\bigcup\{x\in C_i~|~x\notin N_A(v_i)\}\bigcup B\setminus B_i$. Here $D$ is a maximal independent set of $G$, and 
\begin{align*}
    |D| & \leq 1+|C_i|+|B\setminus B_i|\\
     & =1+q_i+r-t_i\\
     & \leq 1+q_i+r-k-q_i\\
     & =1+r-k\\
     & =r-\left\lceil\frac{r}{m-r}\right\rceil+1\quad (\text{since } r=(m-r)k).
\end{align*}
As mentioned in the first case, $D$ is an independent set of $W_G$ and $N_{W_G}(D)$ is a minimal vertex cover of $W_G$. Hence, by the combinatorial definition of the $\v$-number, we have $\mathrm{v}(W_G)\leq |D|\leq r-\lceil\frac{r}{m-r}\rceil+1 $. Finally, since $I(W_G)$ is not a prime ideal, we also have $\v(W_G)\geq 1$. This completes the proof. 
\end{proof}

The following is our main theorem of this section, which shows that the bounds of regularity and $\v$-number given in \Cref{lem:wrv} can be realized by connected whisker graphs. 

\begin{theorem}\label{thm:wrv}
For the class of connected whisker graphs $W_G$ on $n=2m$ vertices, the lattice points of $(\reg(W_G),\v(W_G))$ are given by
$$\mathcal{RV}_{W}(n)=\{(r,v)\mid 1\leq r\leq m-1 \text{ and } 1\leq v\leq r-\left\lceil\frac{r}{m-r}\right\rceil+1\}.$$
Moreover, since a whisker graph always has even number of vertices, we have $\RV_{W}(n)=\emptyset$ if $n$ is odd.
\end{theorem}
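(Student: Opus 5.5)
The inclusion ``$\subseteq$'' is exactly \Cref{lem:wrv}. If $W_G$ is a connected whisker graph on $n=2m$ vertices, then $G$ is connected on $m$ vertices. When $m=1$ we have $W_G=K_2$ with $n=2$; we treat this degenerate case separately, noting that the stated set is empty there and checking it against the intended convention. For $m\geq 2$, part (a) of \Cref{lem:wrv} gives $1\leq r\leq m-1$ and part (b) gives $1\leq v\leq r-\lceil\frac{r}{m-r}\rceil+1$. The odd case is immediate, since $|V(W_G)|=2|V(G)|$.

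For the reverse inclusion, fix $(r,v)$ with $1\leq r\leq m-1$ and $1\leq v\leq r-\lceil\frac{r}{m-r}\rceil+1$. We need a connected graph $G$ on $m$ vertices with $\reg(W_G)=\alpha(G)=r$ (by \Cref{thm:reg whisker}) and $\v(W_G)=v$. Our plan is to mimic the constructions in the proof of \Cref{thm:RV(n)}: take an independent set $B=\{b_1,\dots,b_r\}$ and a set $A=\{a_1,\dots,a_{m-r}\}$. Make $A$ a clique, so that $\alpha(G)=r$ is plausible. Partition $B$ into blocks $B_1,\dots,B_{m-r}$ of sizes $t_1\geq t_2\geq\cdots$ with $\sum t_i=r$, and join each $a_i$ to all of $B_i$. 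Then $\alpha(G)=r$: any independent set contains at most one $a_i$, and choosing $a_i$ loses the block $B_i$ of size $t_i\geq 1$ if $t_i>0$. Blocks with $t_i=0$ would break this, so we give $a_i$ extra neighbours in $B$ to keep $\alpha=r$, or else choose the partition with all $t_i\geq 1$ whenever $r\geq m-r$. The case $r<m-r$ needs a different shape, discussed below.

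To compute $\v(W_G)$, we use that each whisker $\{x,y_x\}$ with $\deg(y_x)=1$ forces, for an admissible set $D$ (independent with $N(D)$ a vertex cover), either $y_x\in D$ or some neighbour of $x$ lies in $D$. Thus $D$ restricted to $G$, together with whisker leaves, dominates. The minimal admissible sets should be $\{a_i\}\cup(B\setminus B_i)$, of size $1+r-t_i$, and $B$ itself, of size $r$. So $\v(W_G)=r-t_1+1$ with $t_1$ the largest block. Letting $t_1$ range from $\lceil\frac{r}{m-r}\rceil$ up to $r$ gives every $v$ from $1$ to $r-\lceil\frac{r}{m-r}\rceil+1$, provided the remaining $r-t_1$ vertices fit into $m-r-1$ blocks of size at most $t_1$ each. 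The lower bound would use a \Cref{main}-type argument: for each $b\in B_j$ with $a_j\notin D$, either $b$ or its leaf lies in $D$, and $D$ contains at most one $a_i$.

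The main obstacle is the range $r<m-r$, and more generally making $\alpha(G)=r$ exact when some $a_i$ carry no block. In that case we plan to attach the extra $a$'s as a clique hanging off $a_1$, or as pendants adjacent to all of $A$. We must then recheck that the extra vertices neither raise $\alpha$ nor create cheaper admissible sets. The point needing care is that an extra vertex $a$ must have a neighbour in $D$, which is automatic if $a$ is adjacent to every $a_i$ and to some $b$ in each candidate set. We would make each extra vertex adjacent to all of $A$ and all of $B$, thereby keeping $\alpha=r$, and then verify the count $\v=r-t_1+1$ directly.
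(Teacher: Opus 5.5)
Your $\subseteq$ direction via \Cref{lem:wrv} matches the paper. So does your construction when every block is nonempty: a clique $A$, each $a_i$ joined to its block $B_i$, and $\v(W_G)=r-t_1+1$ via \Cref{main}. You are also right that $m=1$ must be excluded, as \Cref{lem:wrv} does, since $W_G=K_2$ gives $(1,1)$.

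\textbf{The gap is in the treatment of empty blocks.} These are forced exactly when $v<m-r$, so for every $v$ when $r<m-r$. Your final choice joins each such extra vertex $a$ to all of $A$ and all of $B$, which makes $a$ a dominating vertex of $G$. Then $\{a\}$ is independent in $W_G$, and $N_{W_G}(a)=(V(G)\setminus\{a\})\cup\{y_a\}$ is a vertex cover of $W_G$: every whisker and every edge of $G$ has an endpoint in it. Hence $\v(W_G)=1$. For example, $m=5$, $r=2$, $v=2$ forces $t_1=t_2=1$ and $t_3=0$, and your graph realizes $(2,1)$ instead of $(2,2)$. Your \Cref{main}-type lower bound fails for the same reason, because every $b\in B$ now has degree at least $3$ in $W_G$. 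Your two alternatives (a clique hanging off $a_1$, or vertices adjacent only to $A$) fail the other way. An extra vertex $a$ with no neighbour in $B$ makes $\{a\}\cup B$ independent of size $r+1$, so $\alpha(G)>r$.

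\textbf{What is missing.} An extra vertex must have a neighbour in $B$, to keep $\alpha(G)=r$. It must not, however, create a small maximal independent set of $G$, because every maximal independent set of $G$ is admissible in $W_G$. The paper joins each extra vertex to all of $A$ and to a single vertex $b^{\ast}$ of the largest block $B_1$. An independent set containing an extra vertex then loses $b^{\ast}$, so $\alpha(G)=r$.

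In the lower bound, only $b^{\ast}$ needs separate care. Let $D$ be admissible, so $|D\cap A|\leq 1$. \Cref{main} still applies to every $b\neq b^{\ast}$ whose block vertex is not in $D$. If $D\cap A$ is an extra vertex, this gives $|D|\geq 1+(r-1)=r$. If $D\cap A=\emptyset$, or $D\cap A=\{a_s\}$ with $s\neq 1$, then no $G$-neighbour of $b^{\ast}$ lies in $D$. The whisker at $b^{\ast}$ then forces $D\cap\{b^{\ast},y_{b^{\ast}}\}\neq\emptyset$, so $|D|\geq r$ or $|D|\geq 1+r-t_s\geq r-t_1+1$ respectively. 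If $D\cap A=\{a_1\}$, then \Cref{main} on the other blocks gives $|D|\geq 1+r-t_1$.
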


\begin{proof}
Let $W_G$ be a connected whisker graph on $n$ vertices. Then by \Cref{lem:wrv}, we have $1\leq r\leq m-1$ and $1\leq v\leq r-\left\lceil\frac{r}{m-r}\right\rceil+1$, where $m=\frac{n}{2}$. Therefore, we have 
$$\mathcal{RV}_{W}(n)\subseteq\{(r,v)\mid 1\leq r\leq m-1 \text{ and } 1\leq v\leq r-\left\lceil\frac{r}{m-r}\right\rceil+1\}.$$
For the reverse inclusion, let $r\leq m-1$ and $v\leq r-\lceil\frac{r}{m-r}\rceil+1$ be two positive integers. Then $v$ is of the form $r-\lceil\frac{r}{m-r}\rceil+1-p$ for some $0\leq p\leq r-\lceil\frac{r}{m-r}\rceil$. We have to show that there exists a connected graph $G$ on $m$ vertices such that $\reg(W_G)=r$ and $\v(W_G)=v$. Let us take $G$ to be a graph with the vertex set 
$$V(G)=\{x_1,\ldots,x_{m-r}\}\bigcup     \left(\bigcup_{i=1}^{k}\{y_{i1},\ldots,y_{it_i}\}\right),$$
and the edge set
\begin{align*}
    E(G) & = \left(\bigcup_{i=1}^{k}\{\{x_{i},y_{ij}\}\mid 1\leq j\leq t_i\}\right)\bigcup \{\{x_i,x_j\}\mid 1\leq i<j\leq m-r\}\\
    &\hspace{3cm} \bigcup \{\{y_{11},x_i\}\mid k+1\leq i\leq m-r\},
\end{align*}
where $k\leq m-r$, $\sum_{i=1}^{k}t_i=r$, $t_1=\lceil\frac{r}{m-r}\rceil+p$, and $1\leq t_i\leq t_1$ for each $i=2,\ldots,k$. Then $G$ is a connected graph on $m$ vertices. Note that for each $0\leq p\leq r-\left\lceil{\frac{r}{m-r}}\right\rceil$, the existence of such $G$ is guaranteed due to the fact that $(m-r)\left\lceil{\frac{r}{m-r}}\right\rceil\geq r$. We consider the whisker graph $W_G$ of $G$ with the vertex set and edge set as follows:
\begin{align*}
    V(W_G)& =V(G)\bigcup \{u_1,\ldots,u_{m-r}\}\bigcup \left(\bigcup_{i=1}^{k}\{z_{i1},\ldots,z_{it_i}\}\right),\\
    E(W_G)& =E(G)\bigcup \{\{x_{i},u_{i}\}\mid i\in [m-r]\}\bigcup \left(\bigcup_{i=1}^{m-r}\{\{y_{ij},z_{ij}\}\mid j\in [t_i]\}\right).
\end{align*}
    \begin{figure}[h]
    \centering
    \begin{tikzpicture}
        [scale=.55]
    \draw [fill, color=blue] (1,2) circle [radius=0.1];  
    \node at (0.8,2.4) {$y_{11}$};
    \draw [fill, color=blue] (1,0) circle [radius=0.1]; 
    \node at (0.8,-0.3) {$z_{11}$};
     
    \draw (1,0)--(1,2);
    \draw [fill, color=black] (2.5,4) circle [radius=0.1];  
    \node at (3,4.3) {$x_1$};
     \draw [fill, color=blue] (4,2) circle [radius=0.1];   
    \draw [fill, color=blue] (4,0) circle [radius=0.1];
    \node at (4.2,-0.3) {$z_{1t_1}$};
     \node at (4.3,2.4) {$y_{1t_1}$};
     \draw (4,0)--(4,2); 
     \draw(2.5,4)--(4,2);
     \draw(2.5,4)--(1,2);
     \node at (2.5,1) {$\cdots$};
     
     \draw [fill, color=blue] (6,2) circle [radius=0.1];   
    \draw [fill, color=blue] (6,0) circle [radius=0.1]; 
    \node at (6.2,-0.3) {$z_{21}$};
     \node at (6.0,2.4) {$y_{21}$};
    \draw (6,0)--(6,2);
    \draw [fill, color=black] (7.5,4) circle [radius=0.1];   
     \draw [fill, color=blue] (9,2) circle [radius=0.1];   
    \draw [fill, color=blue] (9,0) circle [radius=0.1];
     \node at (9.2,-0.3) {$z_{2t_2}$};
     \node at (9.3,2.4) {$y_{2t_2}$};
     \draw (9,0)--(9,2); 
     \draw(7.5,4)--(6,2);
     \draw(7.5,4)--(9,2);
     \node at (8,4.3) {$x_2$};
     \node at (7.5,1) {$\cdots$};
     \node at (10.5,4) {\textbf{$\cdots$}};
      \draw [fill, color=blue] (12,2) circle [radius=0.1];   
    \draw [fill, color=blue] (12,0) circle [radius=0.1];
    \node at (12.2,-0.3) {$z_{k1}$};
     \node at (12.1,2.4) {$y_{k1}$};
    \draw (12,2)--(12,0);
    \draw [fill, color=black
    ] (13.5,4) circle [radius=0.1];
    \node at (14,4.3) {$x_{k}$};
    \draw [fill, color=blue] (15,0) circle [radius=0.1];
    \draw [fill, color=blue] (15,2) circle [radius=0.1];
    \node at (15.8,-0.4) {$z_{kt_{k}}$};
     \node at (15.8,2.4) {$y_{kt_{k}}$};
    \draw (15,0)--(15,2);
    \draw (15,2)--(13.5,4);
    \draw (12,2)--(13.5,4);
     \node at (13.5,1) {$\cdots$};
      \draw[dotted, thick] (0,5.5) rectangle (26,3);
       \node at (13,5)  {$K_{m-r}$ on $\{x_1,\ldots,x_{m-r}\}$};
       \node at (14.2,7.3) {$u_{k}$};
        \node at (8,7.3) {$u_2$};
        \node at (3,7.3) {$u_1$};
        \draw [fill, color=black] (13.5,7) circle [radius=0.1];
        \draw [fill, color=black] (7.5,7) circle [radius=0.1];
        \draw [fill, color=black] (2.5,7) circle [radius=0.1];
        \draw (2.5,7)--(2.5,4);
         \draw (7.5,7)--(7.5,4);
         \draw (13.5,7)--(13.5,4);
         \draw [fill, color=black
    ] (18.5,4) circle [radius=0.1];
    \node at (19.3,4.3) {$x_{k+1}$};
    \draw [fill, color=black
    ] (18.5,7) circle [radius=0.1];
    \node at (19.3,7.3) {$u_{k+1}$};
    \draw (18.5,7)--(18.5,4);
     \node at (21,4) {$\ldots$};
      \draw [fill, color=black
    ] (23.5,4) circle [radius=0.1];
    \node at (24.4,4.3) {$x_{m-r}$};
    \draw [fill, color=black
    ] (23.5,7) circle [radius=0.1];
    \node at (24.4,7.3) {$u_{m-r}$};
    \draw (23.5,7)--(23.5,4);
    \draw(1,2)to[bend left=30](23.5,4);
    \draw(1,2)to[bend left=26](18.5,4);
    \end{tikzpicture}
    \caption{A whisker graph $W_G$ with $(\reg(W_G),\v(W_G))=(r,v)$}
    \label{fig:wrv}
\end{figure}
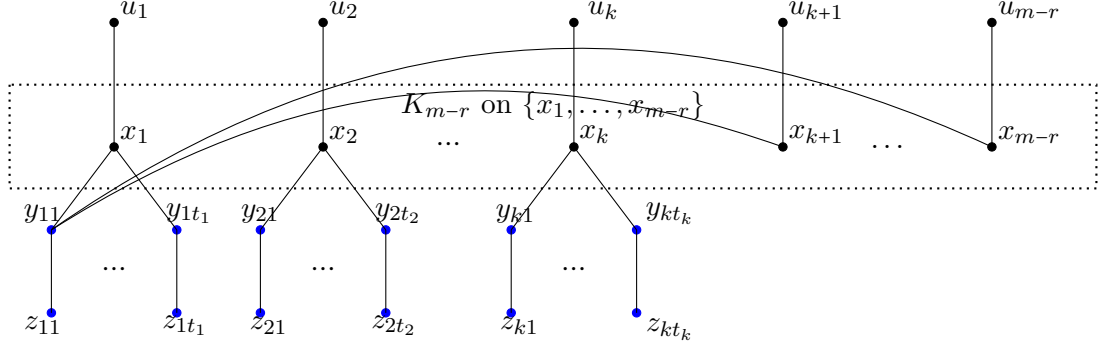
From the structure of $G$ (see \Cref{fig:wrv}), it is clear that the set of vertices $\bigcup_{i=1}^{k} \{y_{i1},\ldots,y_{i{t_i}}\}$ is a maximal independent set of $G$. Thus, the independence number $\alpha(G)\geq \sum_{i=1}^{k}t_i=r$. For the reverse inequality, let $D$ be a maximal independent set of $G$ with $\alpha(G)=|D|$. Since the induced subgraph of $G$ on the vertex set $\{x_1,\ldots,x_{m-r}\}$ forms a complete graph, $D$ can contain at most one $x_i$ among $\{x_1,\ldots,x_{m-r}\}$. If $D$ does not contain any $x_i$, then $D$ must be equal to $\bigcup_{i=1}^{k} \{y_{i1},\ldots,y_{i{t_i}}\}$, which gives $\alpha(G)=r$. Suppose $D$ contains $x_i$ for some $1\leq i\leq m-r$. If $i\in [k]$, consider $D'=(D\setminus \{x_i\})\bigcup \{y_{i1}\}$, otherwise, consider $D'=(D\setminus\{x_i\})\bigcup\{y_{11}\}$. Then, we have $|D|=|D'|$ and $D'\subseteq \bigcup_{i=1}^{k} \{y_{i1},\ldots,y_{i{t_i}}\}$. Therefore, $\alpha(G)=|D|\leq \sum_{i=1}^{k}t_i=r$. Consequently, we have $\alpha(G)=r$. Hence, by \Cref{thm:reg whisker}, $\reg(W_G)=\im(W_G)=\alpha(G)=r$. \par 

Next, our aim is to show that $\v(W_G)=v=r-\lceil\frac{r}{m-r}\rceil+1-p$. First, we consider the independent set $B=\bigcup_{i=2}^{k}\{y_{ij}\mid 1\leq j\leq t_i\}\bigcup\{x_1\}$ of $G$. Since $\{x_1,\ldots,x_{m-r}\}$ induces a complete graph in $W_G$, one can see that the neighbors $N_{W_G}(B)$ of $B$ forms a minimal vertex cover of $W_G$. Therefore, by \Cref{def:v}, we have
$$\v(W_G)\leq |B|=\sum_{i=2}^{k}t_i+1=r-\left\lceil\frac{r}{m-r}\right\rceil-p+1.$$
For the reverse inequality, let $A$ be an independent set of $W_G$ such that $(I(W_G):X_{A})=\la N_{W_G}(A)\ra$ and $|A|=\v(W_G)$. Since $\{x_1,\ldots,x_{m-r}\}$ induces a complete graph in $W_G$, the set $A$ can contains at most one $x_i$. For each $i\in\{2,\ldots,k\}$, consider the collection of sets $B_{ij}=\{x_i,y_{ij},z_{ij}\}$, where $j=1,\ldots,t_i$. Note that $B_{ij}$ induces a $P_3$ as a subgraph of $W_G$ with $\deg y_{ij}=2$ and $\deg z_{ij}=1$. Therefore, by \Cref{main}, we have $B_{ij}\bigcap A\neq \emptyset$. Again, we see that $\{y_{1j},z_{1j}\}\bigcap N_{W_G}(A)\neq \emptyset$ for each $j\in [t_1]$ as $N_{W_G}(A)$ is a minimal vertex cover of $W_G$. Thus, the set of vertices $N_{W_G}(y_{1j})\bigcup N_{W_G}(z_{1j})$ will intersect $A$ non-trivially for each $1\leq j\leq t_1$. Note that
\begin{align*}
   N_{W_G}(y_{1j})\bigcup N_{W_G}(z_{1j}) =
   \begin{cases}
       \{x_1,x_{k+1},\ldots,x_{m-r},y_{1j},z_{1j}\}\quad \text{if }j=1\\
       \{x_1,y_{1j},z_{1j}\}\quad \text{if } 2\leq j\leq t_1
   \end{cases}
\end{align*}
Since $A$ contains at most one $x_i$, combining all the above possibilities, it follows that there exists $s\in [k]$ such that
\begin{align*}
   \v(W_G) =|A| & \geq \sum_{i=1}^{k}t_i-t_s+1\\
   & \geq \sum_{i=1}^{k}t_i-t_1+1 \quad (\text{since }t_1\geq t_s)\\
   & =r-\left\lceil\frac{r}{m-r}\right\rceil-p+1.
\end{align*}
Hence, we finally have $\v(W_G)=r-\lceil\frac{r}{m-r}\rceil-p+1=v$ and this completes the proof.
\end{proof}

\section{The set $\RV_{CW}(n)$: Restriction of $\RV(n)$ to Cameron-Walker graphs}\label{sec:CW}

In this section, we identify all possible pairs of $(\reg(G),\v(G))$ when $G$ is restricted to Cameron-Walker graphs on a fixed number of vertices. In particular we find the following subset of $\RV(n)$:
$$\RV_{CW}(n)=\left\{ (r,v)\in \mathbb{N}^2 \;\middle|\; 
\begin{array}{l}
      \text{there exists a Cameron-Walker graph $G$ on $n$ vertices}\\
      \hspace{1cm}\text{ with } \reg(G)=r \text{ and } \v(G)=v
\end{array}
     \right\}.$$
Let us start with the definition and some properties of Cameron-Walker graphs.

A \textit{pendant edge} (or \textit{whisker}) of a graph is an edge incident to a vertex of degree $1$. Similarly, a \textit{pendant triangle} in a graph is a copy of $C_3$ (equivalently, $K_3$) having two vertices of degree $2$. In the literature, a pendant triangle is usually defined as an induced triangle in which two vertices have degree $2$ and the third vertex has degree greater than $2$. In our setting, however, the graph $C_3$ itself is also regarded as a pendant triangle. A graph is called a \textit{star graph} if it is obtained by attaching several pendant edges to a fixed vertex; in other words, a \textit{star graph} is a connected graph where, except for one vertex, all the other vertices have degree $1$. A graph is said to be a \textit{star triangle} if it is obtained by attaching one or more pendant triangles to a fixed vertex.
\par 

For a graph $G$, we always have $\im(G)\leq m(G)$ by definition. In 2005, Cameron-Walker classified all such connected graphs $G$ for which the equality $\im(G)=m(G)$ holds (see \cite[Theorem 1]{cw05}). A minor oversight in their classification was later corrected in \cite[Remark 0.1]{hhko15}. In particular, for a connected graph $G$, we have $\im(G)=m(G)$ if and only if $G$ is one of the following graphs:
\begin{enumerate}
    \item[(a)] a star graph;
    \item[(b)] a star triangle;
    \item[(c)] a connected finite graph consisting of a connected bipartite graph with vertex partition $\{v_1,\ldots,v_m\}\bigsqcup \{w_1,\ldots,w_m\}$ such that there is at least one whisker
attached to each vertex $v_i$ and that there may be possibly some pendant triangles attached to each vertex $w_j$ (see \Cref{fig:CW}).
\end{enumerate}

\begin{definition}
    A finite connected simple graph $G$ is called a \textit{Cameron-Walker} graph (named after Kathie Cameron and Tracy Walker) if $G$ is neither a star graph nor a star triangle and satisfies $\im(G)=m(G)$, i.e., $G$ is a graph of type (c) described above.
\end{definition}

\begin{figure}[htbp]
    \centering
   \begin{tikzpicture}
       [scale=0.55]
      \draw [fill, color=blue] (0,4) circle [radius=0.1]; 
      \node at (0,4.7) {$x_{1}^{(1)}$};
      \node at (1.5,3.5) {$\cdots$};
      \draw [fill] (1.5,2) circle [radius=0.1];
      \node at (1.5,1.6) {$u_{1}$};
       \draw [fill, color=blue] (3,4) circle [radius=0.1];
       \node at (3,4.7) {$x_{s_1}^{(1)}$};
       \draw [fill, color=blue] (5,4) circle [radius=0.1];
       \node at (5,4.7) {$x_{1}^{(2)}$};
        \node at (6.5,3.5) {$\cdots$};
         \draw [fill] (6.5,2) circle [radius=0.1]; 
          \node at (6.5,1.6) {$u_{2}$};
       \draw [fill, color=blue] (8,4) circle [radius=0.1];
        \node at (8,4.7) {$x_{s_2}^{(2)}$};
       \draw (0,4)--(1.5,2);
       \draw (3,4)--(1.5,2);
       \draw (5,4)--(6.5,2);
       \draw (8,4)--(6.5,2);
        \draw [fill, color=blue] (12,4) circle [radius=0.1];
        \node at (12,4.7) {$x_{1}^{(m)}$};
        \node at (13.5,1.6) {$u_m$};
         \draw [fill, color=blue] (15,4) circle [radius=0.1]; 
         \node at (15,4.7) {$x_{s_m}^{(m)}$};
          \draw [fill] (13.5,2) circle [radius=0.1]; 
         \draw (12,4)--(13.5,2);
          \draw (15,4)--(13.5,2);
          \node at (13.5,3.5) {$\cdots$};
          \node at (10,2.5) {$\cdots$};
          \draw[dotted, thick] (-1.5,-1) rectangle (16.5,2);
          \node at (7.4,0.5)  {Connected bipartite graph on $\{u_1,\ldots,u_m\}\cup\{w_1,\ldots,w_p\}$};
    \draw [fill, color=black] (1,-1) circle [radius=0.1];
    \node at (1,-0.6) {$w_1$};
    \node at (1,-2.5) {$\cdots$};
    \draw [fill, color=blue] (0,-3.5) circle [radius=0.1];
    \node at (0.1,-4.2) {$y_{12}^{(1)}$};
     \draw [fill, color=blue] (-0.5,-3) circle [radius=0.1];
     \node at (-1.5,-2.5) {$y_{11}^{(1)}$};
     \draw (0,-3.5)--(-0.5,-3);
     \draw (0,-3.5)--(1,-1);
     \draw (-0.5,-3)--(1,-1);
     \draw [fill, color=blue] (2,-3.5) circle [radius=0.1];
     \node at (2,-4.2) {$y_{t_11}^{(1)}$};
      \draw [fill, color=blue] (2.5,-3) circle [radius=0.1];
      \node at (3,-2.4) {$y_{t_12}^{(1)}$};
      \draw (2,-3.5)--(2.5,-3);
      \draw (2,-3.5)--(1,-1);
      \draw (2.5,-3)--(1,-1);
      \draw [fill, color=black] (6.5,-1) circle [radius=0.1];
      \node at (6.5,-0.6) {$w_2$};
      \draw [fill, color=blue] (5,-3) circle [radius=0.1];
      \node at (4.5,-2.5) {$y_{11}^{(2)}$};
      \draw [fill, color=blue] (5.5,-3.5) circle [radius=0.1];
      \node at (5.6,-4.2) {$y_{12}^{(2)}$};
      \draw [fill, color=blue] (8,-3) circle [radius=0.1];
      \node at (8.4,-2.5) {$y_{t_22}^{(2)}$};
      \draw [fill, color=blue] (7.5,-3.5) circle [radius=0.1];
      \node at (7.6,-4.2) {$y_{t_21}^{(2)}$};
      \draw (5,-3)--(5.5,-3.5);
      \draw (5,-3)--(6.5,-1);
      \draw (5.5,-3.5)--(6.5,-1);
      \draw (7.5,-3.5)--(6.5,-1);
      \draw (8,-3)--(6.5,-1);
      \draw (7.5,-3.5)--(8,-3);
      \node at (6.5,-2.5) {$\cdots$};
       \draw [fill, color=black] (14,-1) circle [radius=0.1];
       \node at (10.3,-2) {$\cdots$};
       \node at (14,-0.6) {$w_p$};
       \node at (14,-2.5) {$\cdots$};
      \draw [fill, color=blue] (13,-3.5) circle [radius=0.1];
      \draw [fill, color=blue] (12.5,-3) circle [radius=0.1];
      \draw [fill, color=blue] (15,-3.5) circle [radius=0.1];
      \node at (15.1,-4.2) {$y_{t_p1}^{(p)}$};
      \node at (13.1,-4.2) {$y_{12}^{(p)}$};
      \node at (12.1,-2.5) {$y_{11}^{(p)}$};
      \node at (15.9,-2.5) {$y_{t_p2}^{(p)}$};
      \draw [fill, color=blue] (15.5,-3) circle [radius=0.1];
      \draw (15.5,-3)--(15,-3.5);
      \draw (13,-3.5)--(12.5,-3);
      \draw (14,-1)--(15.5,-3);
      \draw (14,-1)--(15,-3.5);
      \draw (14,-1)--(13,-3.5);
      \draw (14,-1)--(12.5,-3);
   \end{tikzpicture}
    \caption{A Cameron-walker graph}
    \label{fig:CW}
\end{figure}
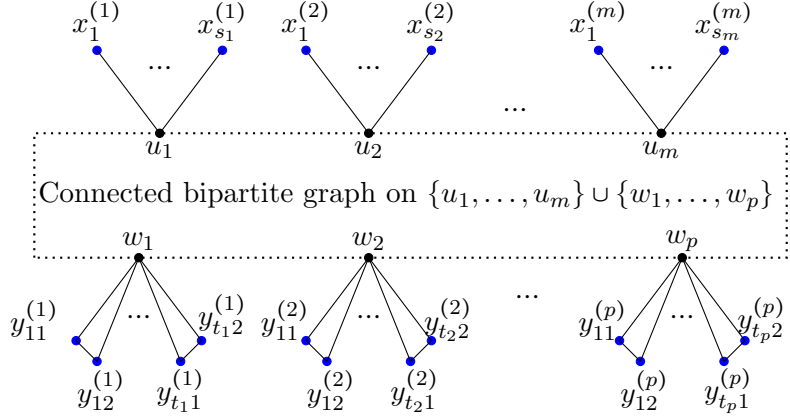

Note that for a Cameron-Walker graph with the notation in \Cref{fig:CW}, we always have $s_{i}\geq 1$ for each $i\in [m]$ and $t_{j}\geq 0$ for each $j\in [p]$. The following result will be used in the rest of this section.

\begin{theorem}\cite[Theorem 4.2]{hkmv22}\label{cameron regularity}
Let $G$ be a Cameron-Walker graph with the notation as in Figure \ref{fig:CW}. Then
\begin{enumerate}[(i)]
    \item $|V(G)|=m+p+\displaystyle\sum_{i=1}^{m}s_i+2\displaystyle\sum_{j=1}^{p}t_j;$
    \item $\reg(G)=m+\displaystyle\sum_{j=1}^{p}t_j$.
\end{enumerate}
    \end{theorem}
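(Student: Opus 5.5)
The plan is to count vertices for (i), and for (ii) to sandwich $\reg(G)$ between $\im(G)$ and the matching number $m(G)$, computing both explicitly. Throughout I use the notation of \Cref{fig:CW}. One notational point must be fixed first: the bipartite part has $m$ vertices $u_1,\ldots,u_m$ carrying whiskers and $p$ vertices $w_1,\ldots,w_p$ carrying pendant triangles, and $m(G)$ denotes the matching number.

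For (i), I partition $V(G)$ into four disjoint groups and add their sizes:
\begin{itemize}
\item the $m$ vertices $u_i$;
\item the $p$ vertices $w_j$;
\item the $\sum_i s_i$ whisker leaves $x^{(i)}_k$;
\item the $2\sum_j t_j$ triangle vertices $y^{(j)}_{k\ell}$, two for each of the $t_j$ triangles at $w_j$.
\end{itemize}
Nothing more is needed for this part.

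For the lower bound in (ii), I take the edge set
\[
M=\{\{u_i,x^{(i)}_1\}\mid i\in[m]\}\cup\{\{y^{(j)}_{k1},y^{(j)}_{k2}\}\mid j\in[p],\ k\in[t_j]\}.
\]
This uses $s_i\geq 1$, so that $x^{(i)}_1$ exists. I will check that $M$ is an induced matching, using three facts:
\begin{itemize}
\item the $u_i$ lie on one side of the bipartition, so no two of them are adjacent;
\item each whisker leaf $x^{(i)}_k$ is adjacent only to $u_i$;
\item each triangle vertex $y^{(j)}_{k\ell}$ is adjacent only to $w_j$ and to its partner in the same triangle.
\end{itemize}
Since no $w_j$ is an endpoint of an edge of $M$, there are no further edges among the vertices of $M$. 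Hence \Cref{thm:reg lower} gives $\reg(G)\geq \im(G)\geq m+\sum_j t_j$.

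For the upper bound in (ii), I use $\reg(G)\leq m(G)$ from \cite[Theorem 6.7]{hv08}, already used in the proof of \Cref{prop:reg<n/2}. It then suffices to show $m(G)\leq m+\sum_j t_j$. Take an arbitrary matching $N$ and split its edges into two kinds:
\begin{itemize}
\item edges containing some $u_i$: there are at most $m$ of these, since the edges of $N$ are pairwise disjoint;
\item edges containing no $u_i$: such an edge cannot use a whisker leaf, whose only neighbour is $u_i$. It also cannot join two bipartite vertices, because every bipartite edge contains some $u_i$. So it lies entirely inside a block $W_j=\{w_j\}\cup\{y^{(j)}_{k\ell}\}$. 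This block has $2t_j+1$ vertices, so it contains at most $t_j$ disjoint edges.
\end{itemize}
Summing gives $|N|\leq m+\sum_j t_j$. Combining the two bounds, $\reg(G)=m+\sum_j t_j$, and as a by-product $\im(G)=m(G)=m+\sum_j t_j$.

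The only delicate step is the case analysis in the upper bound, namely checking that every edge avoiding all $u_i$ lies inside a single block $W_j$. This rests on the bipartite side of $\{u_i\}$ being independent and on the exact adjacency pattern of pendant triangles and whiskers. Once that structural fact is checked, the rest is routine.
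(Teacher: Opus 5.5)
Your proof is correct: (i) is the direct vertex count, and for (ii) you squeeze $\reg(G)$ between $\im(G)$ and $m(G)$ using \Cref{thm:reg lower} and \cite[Theorem 6.7]{hv08}, exhibit an induced matching of size $m+\sum_j t_j$, and bound every matching by the same number, which is the standard argument for this formula. The paper gives no proof of its own and simply cites \cite[Theorem 4.2]{hkmv22}; that result likewise rests on $\im(G)\le\reg(G)\le m(G)$ together with $\im(G)=m(G)$ for Cameron--Walker graphs, so your route is essentially the same and there is nothing to add.
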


The following lemma establishes sharp upper bounds for the regularity and the $\v$-number of Cameron-Walker graphs.

\begin{lemma}\label{lem:cwrv}
    Let $G$ be a Cameron-Walker graph on $n$ vertices. Then we have $n\geq 5$ and
    \begin{enumerate}
        \item[(a)] $2\leq \reg(G)\leq \left\lfloor \frac{n-1}{2}\right\rfloor,$
        \item[(b)] $1\leq \mathrm{v}(G)\leq \min\{ r-1, n-2r\}$, where $r=\reg(G)$.
    \end{enumerate}
 \end{lemma}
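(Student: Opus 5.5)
The plan is to read everything off the parametrization in \Cref{cameron regularity}. With the notation of \Cref{fig:CW}, we have
\[
n=m+p+\sum_{i=1}^m s_i+2\sum_{j=1}^p t_j,\qquad r=\reg(G)=m+\sum_{j=1}^p t_j .
\]
Since the bipartite part is connected and nonempty on both sides, $m\geq 1$ and $p\geq 1$, and $s_i\geq 1$ for all $i$. Subtracting gives the key identity
\[
n-2r=p+\sum_{i=1}^m s_i-m\geq p\geq 1 .
\]
Hence $r\leq \frac{n-1}{2}$, that is, $r\leq\lfloor\frac{n-1}{2}\rfloor$. For the lower bound $r\geq 2$: if $m\geq 2$ it is immediate. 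If $m=1$, then every $w_j$ is adjacent to $u_1$. If moreover all $t_j=0$, every $w_j$ is a leaf at $u_1$ and $G$ is a star, which is excluded. So some $t_j\geq 1$ and $r\geq 2$. Finally $n\geq 2r+1\geq 5$, which settles $n\geq 5$ and part (a).

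For (b), the lower bound $\v(G)\geq 1$ holds because $I(G)$ is a nonzero ideal generated in degree $2$, hence not prime. Since $p\leq n-2r$ by the identity above, it suffices to prove $\v(G)\leq p$ and $\v(G)\leq r-1$. In each case I will exhibit an independent set $A$ with $N_G(A)$ a vertex cover and apply \Cref{def:v}.

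\textbf{Bound $\v(G)\leq p$.} Take $A=\{w_1,\ldots,w_p\}$, which is independent. By connectivity each $u_i$ has a neighbor among the $w_j$, so every $u_i$ lies in $N_G(A)$. This covers all whiskers and all bipartite edges. Every triangle vertex $y^{(j)}_{kl}$ lies in $N_G(w_j)$, which covers every edge of every pendant triangle.

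\textbf{Bound $\v(G)\leq r-1$.} Fix $w_{j_0}$. By connectivity it has at least one neighbor among the $u_i$. Let
\[
A=\{w_{j_0}\}\cup\{u_i \mid u_i\notin N_G(w_{j_0})\}\cup\{y^{(j)}_{k1}\mid j\neq j_0,\ 1\leq k\leq t_j\},
\]
so $|A|\leq 1+(m-1)+\sum_{j\neq j_0}t_j\leq r-1+1-t_{j_0}-\cdots$; more precisely $|A|\leq r-t_{j_0}\leq r$, and it is at most $r-1$ because at least one $u_i$ is dropped. The checks are as follows.
\begin{itemize}
\item $A$ is independent: the $u_i$ in $A$ are non-neighbors of $w_{j_0}$, and triangle vertices at $w_j$ with $j\neq j_0$ are adjacent only to $w_j$ and their triangle partner.
\item Every whisker is covered: each $u_i$ is either in $A$, which puts its whisker leaves in $N_G(A)$, or adjacent to $w_{j_0}$, which puts $u_i$ in $N_G(A)$.
\item Every bipartite edge $u_iw_j$ is covered: either $u_i\in A$, so $w_j\in N_G(A)$, or $u_i\in N_G(A)$.
\item The triangles at $w_{j_0}$ are covered, since both their $y$-vertices lie in $N_G(w_{j_0})$.
\item A triangle at $w_j$ with $j\neq j_0$ is covered, since $w_j$ and $y^{(j)}_{k2}$ are neighbors of $y^{(j)}_{k1}\in A$.
\end{itemize}

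Combining the two bounds gives $\v(G)\leq\min\{p,r-1\}\leq\min\{r-1,n-2r\}$. The only delicate step is this case-by-case vertex-cover verification for the second set, in particular making sure that dropping the neighbors of $w_{j_0}$ among the $u_i$ does not uncover any whisker.
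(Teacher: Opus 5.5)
Your part (a), the lower bound $\v(G)\ge 1$, and the bound $\v(G)\le p\le n-2r$ are correct. For (a) your argument is more self-contained than the paper's, which just quotes a result of Hibi--Kimura--Matsuda--Van Tuyl. The gap is in the bound $\v(G)\le r-1$: the cardinality count is off by one.

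Write $d=|N_U(w_{j_0})|\ge 1$. Your set then has exactly
\[
|A| = 1+(m-d)+\sum_{j\neq j_0}t_j = r+1-d-t_{j_0}.
\]
Dropping one $u_i$ is already used up to offset the added vertex $w_{j_0}$, so you cannot count it a second time. For an arbitrary $w_{j_0}$ with $d=1$ and $t_{j_0}=0$ you only get $|A|=r$. Concretely, take the bipartite part to be the path $u_1w_1u_2w_2$, with one whisker on each $u_i$ and no triangles, so $r=2$. Choosing $w_{j_0}=w_2$ gives $A=\{w_2,u_1\}$, of size $2=r$. A sanity check points to the same problem: your argument never uses that $G$ is not a star. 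Yet for a star ($m=1$, all $t_j=0$) one has $r=\v(G)=1$, so the bound $\v(G)\le r-1$ genuinely depends on that hypothesis.

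The fix is to choose $w_{j_0}$ deliberately, which is what the paper's case split does. Your independence and vertex-cover checks for $A$ are correct and go through unchanged for either choice below.
\begin{itemize}
\item If some $t_j\ge 1$, take $j_0$ with $t_{j_0}\ge 1$. Then $|A|\le r-t_{j_0}\le r-1$. The paper instead uses all $w_j$ with $t_j\neq 0$ together with $U\setminus\bigcup_j N_U(w_j)$, getting $|A|\le c+m-1\le r-1$.
\item If all $t_j=0$, then $r=m$, and non-starness forces $m\ge 2$. A path in the connected bipartite part between two vertices of $U$ passes through some $w_k$ adjacent to two vertices of $U$. Taking $w_{j_0}=w_k$ gives $d\ge 2$ and hence $|A|\le m-1=r-1$.
\end{itemize}
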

 
\begin{proof}
  a) From \cite[Theorem 5.1]{hkmv22}, it follows that $2\leq \reg(G)\leq \lfloor\frac{n-1}{2}\rfloor$.\\
  \noindent b) Let $G$ be a Cameron-Walker graph with the notation as shown in \Cref{fig:CW}. Then observe that
\begin{align}\label{eq1}
   n & =m+p+\sum_{i=1}^{m}s_i+2\sum_{j=1}^{p}t_j \nonumber\\
   & \geq 2m+p+2\sum_{j=1}^{p}t_j,
\end{align}
due to the fact that $s_i\geq 1$ for all $1\leq i\leq m$. In this regard, one should note that some of $t_j$ may be $0$, that means there are no triangles attached to the vertex $w_j$. Now, by \Cref{cameron regularity}, we have $\reg(G)=r=m+\sum_{j=1}^{p}t_j$, which gives $n\geq 2r+p$ by \Cref{eq1}, equivalently, $n-2r\geq p$. From the structure of $G$ as shown in \Cref{fig:CW}, it is clear that $W=\{w_1,\ldots,w_p\}$ is an independent set of $G$ such that $N_G(W)$ is a vertex cover of $G$. Therefore, by the definition of the $\v$-number of a graph, we have $\mathrm{v}(G)\leq |W|=p$. Consequently, it follows that $\mathrm{v}(G)\leq n-2r$. Now, it remains to show that $\v(G)\leq r-1$. If $p<m$, then we have $\mathrm{v}(G)\leq p\leq m-1\leq r-1$. Suppose $p\leq m$. Then we consider two possible cases:\\
      \noindent \textbf{Case-I.} Let $t_j\neq 0$ for some $1\leq j\leq p$. Without loss of generality, we assume $t_{j}\neq 0$ for all $1\leq j\leq c$ and $t_j=0$ for all $c+1\leq j\leq p$ if $c<p$. Now, we consider the set of vertices $A=\{w_1,\ldots,w_c\}\bigcup (U\setminus\bigcup_{j=1}^{c}N_U(w_j))$, where $U=\{u_1,\ldots,u_m\}$. Then from the structure of $G$ (see \Cref{fig:CW}), it is clear that $A$ is an independent set of $G$ and $N_G(A)$ is a vertex cover of $G$. Therefore, by the definition of $\v$-number, we have
     \begin{align*}
        \v(G)\leq |A| & =c+m-|\bigcup_{j=1}^{c}N_U(w_j)|.
     \end{align*}
     Since $G$ is connected, $\bigcup_{j=1}^{c}N_{U}(w_j)\neq \emptyset$, i.e. $|\bigcup_{j=1}^{c}N_U(w_j)|\geq 1$. Thus,
     $$\v(G)\leq c+m-1\leq \sum_{j=1}^{p}t_j+m-1=r-1.$$
  \noindent \textbf{Case-II.} Let $t_j=0$ for all $1\leq j\leq p$. Then we can see that $m\geq 2$ as $G$ is not a star graph. Now, $G$ being connected there exists a path between $u_1$ and $u_2$. From the structure of $G$, it is clear that the path between $u_1$ and $u_2$ actually lie in the induced subgraph $G[S]$, where $S=U\cup W$ with $W=\{w_1,\ldots,w_p\}$. Since the graph $G[S]$ is a connected bipartite graph with the partite sets $U$ and $W$, there exists $u_i,u_j\in U$ with $i\neq j$ and $w_k\in W$ such that $\{u_{i},w_k\}, \{u_{j},w_k\}\in E(G[S])$. Now, let us take the set $A=\{w_k\}\bigcup (U\setminus N_{U}(w_k))$. Then one can easily verify that $A$ is an independent set of $G$ such that $N_{G}(A)$ is a vertex cover of $G$. Therefore, by the definition of $\v$-number, we have 
  \begin{align*}
      \v(G)& \leq |A|\\
      & = 1+|U|-|N_{U}(w_k)|\\
      & \leq 1+|U|-2 \quad (\text{since } |N_{U}(w_k)|\geq 2)\\
      & =m-1=r-1 \quad (\text{since } t_j=0 \text{ for all } j\in[p]).
  \end{align*}
  Hence, we finally get $\v(G)\leq \min\{r-1,n-2r\}$, where $r=\reg(G)$.
\end{proof}
The following theorem is the main result of this section. Note that \Cref{lem:cwrv} is in fact a part of this theorem and has been stated separately for the convenience of the reader.

\begin{theorem}\label{thm:cwrv}
    For the class of Cameron-Walker graphs, we have $\mathcal{RV}_{CW}(n)=\emptyset$ for $n<5$, and for $n\geq 5$, we have
     $$\mathcal{RV}_{CW}(n)=\left\{(r,v)\mid 2\leq r\leq \left\lceil\frac{n-1}{2}\right\rceil \text{ and } 1\leq v\leq \min\{r-1,n-2r\}\right\}.$$
\end{theorem}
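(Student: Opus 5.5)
The plan is to prove the two inclusions separately. The inclusion ``$\subseteq$'' is essentially \Cref{lem:cwrv}. That lemma shows that every Cameron-Walker graph has $n\geq 5$, which gives $\RV_{CW}(n)=\emptyset$ for $n<5$. For $n\geq5$ it gives $2\leq r$ and $1\leq v\leq\min\{r-1,n-2r\}$. The upper bound on $r$ in the statement is written with $\lceil\frac{n-1}{2}\rceil$, whereas \Cref{lem:cwrv} gives $\lfloor\frac{n-1}{2}\rfloor$. This causes no discrepancy: the condition $1\leq v\leq n-2r$ already forces $2r\leq n-1$, so the right-hand set is the same whichever bound is written. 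I will record this remark first.

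For ``$\supseteq$'', fix $(r,v)$ with $2\leq r$, $1\leq v\leq r-1$ and $v\leq n-2r$. I will construct a Cameron-Walker graph in the notation of \Cref{fig:CW} as follows.
\begin{itemize}
\item Take $m=r-v+1\geq 2$ vertices $u_1,\dots,u_m$ and $p=v$ vertices $w_1,\dots,w_v$.
\item Join $w_1$ to every $u_i$, and join $w_j$ to $u_1$ for $2\leq j\leq v$.
\item Attach exactly one pendant triangle to each $w_j$ with $j\geq2$, and none to $w_1$. Thus $t_1=0$, $t_j=1$ for $j\geq2$, and $\sum t_j=v-1$.
\item Attach whiskers to the $u_i$ with $s_i\geq1$ and $\sum s_i=n-r-2v+1$.
\end{itemize}
The whisker count is admissible because $n-r-2v+1\geq m=r-v+1$ is equivalent to $v\leq n-2r$.

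I will then check that this is a genuine Cameron-Walker graph. The bipartite part is connected through $w_1$, every $u_i$ carries a whisker, and since $m\geq2$ the graph is neither a star nor a star triangle. By \Cref{cameron regularity}, the vertex count is $m+p+\sum s_i+2(v-1)=n$ and $\reg(G)=m+(v-1)=r$.

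For the $\v$-number, the upper bound is immediate: $W=\{w_1,\dots,w_v\}$ is independent and $N_G(W)$ is a vertex cover, as in the proof of \Cref{lem:cwrv}, so $\v(G)\leq v$.

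For the lower bound, let $A$ be an independent set with $N_G(A)$ a vertex cover. The argument has two parts.
\begin{itemize}
\item For each triangle $\{w_j,y,y'\}$ with $j\geq2$, the edge $\{y,y'\}$ must meet $N_G(A)$. Since $N_G(y)\cup N_G(y')\subseteq\{w_j,y,y'\}$, this forces $A\cap\{w_j,y,y'\}\neq\emptyset$. This gives $v-1$ pairwise disjoint sets, each meeting $A$.
\item Consider $u_2$, which exists because $m\geq2$, together with a whisker $\{u_2,x\}$. This edge must meet $N_G(A)$. Since $N_G(x)=\{u_2\}$ and $N_G(u_2)$ consists of $w_1$ and the whisker vertices of $u_2$, the set $A$ must contain $u_2$, $w_1$, or a whisker vertex of $u_2$. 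All of these lie outside the triangle sets.
\end{itemize}
Hence $|A|\geq v$, and therefore $\v(G)=v$.

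The main obstacle is the choice of construction: it must simultaneously satisfy the vertex-count constraint and make the lower bound on $\v$ provable by a disjoint-sets argument in the spirit of \Cref{main}. Putting all triangles on distinct $w_j$ of degree one into $U$, and keeping $w_1$ triangle-free, is what makes both work. The case $v=1$ is included: there are then no triangles and $w_1$ is adjacent to all $u_i$, and the bound $\v\geq1$ is trivial.
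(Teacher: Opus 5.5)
Your proof is correct. Its architecture matches the paper's. The inclusion $\subseteq$ comes from \Cref{lem:cwrv}. The inclusion $\supseteq$ comes from an explicit Cameron-Walker witness, where $\{w_1,\dots,w_v\}$ gives $\mathrm{v}(G)\le v$ and $v$ pairwise disjoint vertex sets, each forced to meet $A$, give $\mathrm{v}(G)\ge v$.

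The witness graph, however, is different.
\begin{itemize}
\item The paper's witness is triangle-free, with $m=r$ (one whisker per $u_i$) and $p=n-2r$. Here $w_1$ is joined to $u_1,\dots,u_{r-v+1}$. Each $w_i$ with $2\le i\le v$ is joined to a private $u_{r-v+i}$ and to $u_{r-v+1}$. The surplus vertices $w_{v+1},\dots,w_{n-2r}$ hang as leaves on $u_{r-v+1}$. The lower bound is then a direct application of \Cref{main} to the $v$ disjoint induced paths $\{w_i,u,x\}$ with $\deg u=2$.
\item Your witness has $m=r-v+1$ and $p=v$. You supply the missing regularity with $v-1$ pendant triangles through the $\sum t_j$ term of \Cref{cameron regularity}, and you absorb the surplus vertices into whiskers. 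Your lower bound replaces \Cref{main} by its easy triangle analogue, plus one whisker neighbourhood at $u_2$.
\end{itemize}
Both routes are equally short. The paper's witness has the mild extra feature of being a tree: it has $r-v+1+(v-1)+(n-2r-1)+r=n-1$ edges and is connected. So it shows every pair is realized by an acyclic Cameron-Walker graph, using only the already-proved \Cref{main}.

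Your explicit remark that $1\le v\le n-2r$ forces $r\le\lfloor\frac{n-1}{2}\rfloor$ is worth keeping. It shows that the $\lceil\frac{n-1}{2}\rceil$ in the statement and the $\lfloor\frac{n-1}{2}\rfloor$ in \Cref{lem:cwrv} describe the same set. The paper passes over this discrepancy silently.
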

\begin{proof}
    From the definition of Cameron-Walker graph it follows that there is no Cameron-Walker graph on $n<5$ vertices. So, we have $\mathcal{RV}_{CW}(n)=\emptyset$ for $n<5$. Now, we assume $n\geq 5$. Due to \Cref{lem:cwrv}, we have 
    $$\mathcal{RV}_{CW}(n)\subseteq\left\{(r,v)\mid 2\leq r\leq \left\lceil\frac{n-1}{2}\right\rceil \text{ and } 1\leq v\leq \min\{r-1,n-2r\}\right\}.$$
    for the reverse inclusion, let $r,v$ be two integers satisfying $2\leq r\leq \left\lceil\frac{n-1}{2}\right\rceil$ and $1\leq v\leq \min\{r-1,n-2r\}$. We have to show that there exists a Cameron-Walker graph $G$ on $n$ vertices such that $\reg(G)=r$ and $\v(G)=v$. Let $G$ be a Cameron-Walker graph on $n\geq 5$ vertices with the vertex set 
    $$V(G)=\{w_1,\ldots,w_{n-2r},u_1,\ldots,u_r,x_1,\ldots,x_r\}$$ and the edge set
    \begin{align*}
        E(G)&=\{\{w_1,u_i\}~|~i=1,\ldots,r-v+1\}\bigcup\{\{w_i,u_{r-v+i}\}~|~i=2,\ldots,v\}\\
        &\hspace{2cm} \bigcup\{\{w_i,u_{r-v+1}\}~|~i=2,\ldots,n-2r\}\bigcup\{\{u_i,x_i\}~|~i=1,\ldots,r\}.
    \end{align*}
    
   \begin{figure}[htbp]
    \centering
   \begin{tikzpicture}
       [scale=0.55]
      \draw [fill, color=blue] (0,2) circle [radius=0.1];
      \draw [fill, color=blue] (0,0) circle [radius=0.1];
      \node at (-0.4,0) {$u_1$};
      \node at (-0.4,2) {$x_1$};
      \draw (0,2)--(0,0);
      \draw (0,0)--(2,-1);
      \draw [fill, color=black] (2,-1) circle [radius=0.1];
      \node at (2,-1.4) {$w_1$};
      \draw [fill, color=blue] (4,2) circle [radius=0.1];
      \draw [fill, color=blue] (4,0) circle [radius=0.1];
      \node at (5,0.2) {$u_{r-v+1}$};
      \node at (5,2) {$x_{r-v+1}$};
      \draw [fill, color=black] (4,-1) circle [radius=0.1];
      \node at (4,-1.4) {$w_2$};
      \draw (4,2)--(4,0);
      \draw (4,0)--(2,-1);
      \draw (4,0)--(4,-1);
      \draw(4,-1)--(7,0);
      \draw(7,0)--(7,2);
      \draw (7,-1)--(4,0);
      \draw [fill, color=blue] (7,2) circle [radius=0.1];
      \draw [fill, color=blue] (7,0) circle [radius=0.1];
      \node at (8,0) {$u_{r-v+2}$};
      \node at (8,2) {$x_{r-v+2}$};
      \draw [fill, color=black] (7,-1) circle [radius=0.1];
      \node at (7,-1.4) {$w_3$};
      \draw [fill, color=blue] (10,2) circle [radius=0.1];
      \draw [fill, color=blue] (10,0) circle [radius=0.1];
      \node at (11,0) {$u_{r-v+3}$};
      \node at (11,2) {$x_{r-v+3}$};  
      \draw (10,0)--(10,2);
      \draw (7,-1)--(10,0);
       \draw [fill, color=blue] (14,2) circle [radius=0.1];
      \draw [fill, color=blue] (14,0) circle [radius=0.1];
      \node at (14.4,0) {$u_{r}$};
      \node at (14.4,2) {$x_{r}$}; 
       \draw [fill, color=black] (12,-1) circle [radius=0.1];
      \node at (12,-1.4) {$w_v$}; 
       \draw (4,0)--(12,-1); 
       \draw (14,0)--(12,-1); 
       \draw (14,0)--(14,2); 
        \draw [fill, color=black] (14.5,-1) circle [radius=0.1];
        \draw [fill, color=black] (17,-1) circle [radius=0.1];
        \node at (17.3,-1.4) {$w_{n-2r}$};
        \node at (15.1,-1.4) {$w_{v+1}$};
        \node at (15.2,-1) {$\cdots$};
        \draw (4,0)--(14.5,-1);
        \draw (4,0)--(17,-1);
        \node at (2,1) {$\cdots$};
        \node at (12,1) {$\cdots$};
        \end{tikzpicture}
    \caption{A Cameron-walker graph $G$ with $(\reg(G),\v(G))=(r,v)$}
    \label{fig:CW-exists}
\end{figure}
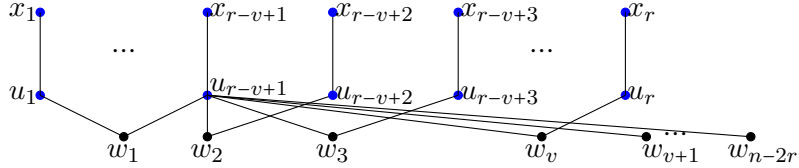
  \noindent The graph $G$ has been shown in \Cref{fig:CW-exists}. Compare the labeling of the graph $G$ with the labeling of the general Cameron-Walker graph as shown in \Cref{fig:CW}. From the structure of the graph $G$, we see that no triangles are there in $G$, and so, we have $t_j=0$ for all $1\leq j\leq n-2r$. Therefore, by \Cref{cameron regularity}, we get $\reg(G)=r$. Now, to show $\v(G)=v$, we first consider the the independent set $A=\{w_1,\ldots,w_v\}$. Since $\{u_1,\ldots,u_r\}\subseteq N_{G}(A)$ and $\{x_1,\ldots, x_r,w_{v+1},\ldots,w_{n-2r}\}$ is a set of pendant vertices (i.e., vertices of degree $1$), we see that $N_{G}(A)$ is a vertex cover of $G$. Thus, by the definition of $\v$-number, we get $\v(G)\leq |A|=v$. To prove the reverse inequality, let us consider an independent set $B$ corresponding to $\v(G)$, i.e., $N_{G}(B)$ is a minimal vertex cover of $G$ and $|B|=\v(G)$. Now, we consider the following sets of vertices: $B_1=\{w_1,u_1,x_1\}$ and $B_i=\{w_i,u_{r-v+i},x_{r-v+i}\}$ for $i=2,\ldots,v$. Note that by \Cref{lem:cwrv}, we have $r-v\geq 1$, and thus, $u_1\neq u_{r-v+1}$. Consequently, we see that $B_i$ induces a $P_3$ in $G$ with $\deg u_i=2$ and $\deg x_i=1$ for each $i=1,\ldots,v$. Therefore, by \Cref{main}, we get $B_i\bigcap B\neq\phi$ for each $i\in [v]$. Since $B_i\bigcap B_j=\phi$ for $i\neq j$, we have 
   $$v\leq\sum_{i=1}^{v}|B\bigcap B_i|\leq |B|=\v(G).$$ 
   Hence, we get $\v(G)=v$.
\end{proof}

\subsection{Future Directions} Let us consider the set $A(n)$ defined in \Cref{thm:RV(n)}. A natural question is how sharp the bound $A(n)$ is for $\RV(n)$. Observe that, in proving $A(n)\subseteq \RV(n)$, we used chordal graphs in both cases of the proof of \Cref{thm:RV(n)}. If $\RV_{\text{chordal}}(n)$ denotes the subset of $\RV(n)$ obtained by restricting to connected chordal graphs, then we have from the proof of \Cref{thm:RV(n)} that
\[
A(n)=\{(r,v)\in \mathbb{N}^{2} \mid 1\leq r<\tfrac{n}{2},\; 1\leq v\leq r-\left\lceil\tfrac{r}{n-2r}\right\rceil+1\}\subseteq \RV_{\text{chordal}}(n).
\]
Motivated by this observation and supported by our experimental evidence, we propose the following conjecture:
\begin{conjecture}
    For the class of connected chordal graphs on $n\geq 3$ vertices, we have
    $$\RV_{chordal}(n)=\left\{(r,v)\in \mathbb{N}^{2} \mid 1\leq r<\frac{n}{2}, 1\leq v\leq r-\left\lceil\frac{r}{n-2r}\right\rceil+1\right\}.$$
\end{conjecture}

As a direction for future work, let $\RV_{\text{bip}}(n)$ denote the collection of all pairs (regularity, $\v$-number) arising from connected bipartite graphs on $n$ vertices. Observe that knowing $\RV(n)$ does not necessarily determine the set $\RV_{\text{bip}}(n)$. Motivated by the work of Erey and Hibi~\cite{eh22}, it would be desirable to determine the set $\RV_{\text{bip}}(n)$ explicitly. Once this case is better understood, one may then investigate the larger set $\RV(n)$, which is quite challenging. Furthermore, it would also be interesting to compute these pairs for some well-known classes of connected graphs.

\subsection*{Acknowledgements} Saha acknowledges support from the Anusandhan National Research Foundation (ANRF), Government of India, under the ARG-MATRICS Grant No. ANRF/ARGM/2025/002203/MTR.

\subsection*{Data availability statement} Data sharing does not apply to this article as no new data were created or
analyzed in this study.

\subsection*{Conflict of interest} The authors declare that they have no known competing financial interests or personal
relationships that could have appeared to influence the work reported in this paper.

\bibliographystyle{abbrv}
	\bibliography{refrv}

\end{document}